\definecolor{rosso}{RGB}{162,0,0}
\definecolor{verde}{RGB}{0,100,0}
\definecolor{blu}{RGB}{0,0,162}
\newcommand{\MD}{\textsf{MD}}
\newcommand{\CC}{\mathbb{C}}
\newcommand{\QQ}{\mathbb{Q}}
\newcommand{\ZZ}{\mathbb{Z}}
\renewcommand{\AA}{\mathbb{A}} 
\newcommand{\XX}{\mathcal{X}} 
\newcommand{\YY}{\mathcal{Y}} 
\newcommand{\can}{{\mathop{can}}}
\newcommand{\rig}{{\mathop{rig}}}
\newcommand{\Xcan}{\mathcal{X}^{\can}}
\newcommand{\Xrig}{\mathcal{X}^{\rig}}
\newcommand{\Ycan}{\mathcal{Y}^{\can}}
\newcommand{\Yrig}{\mathcal{Y}^{\rig}}
\newcommand{\DD}{\mathcal{D}} 
\newcommand{\FF}{\mathcal{F}}
\newcommand{\OO}{\mathcal{O}} 
\newcommand{\LL}{\mathcal{L}} 
\newcommand{\RR}{\mathcal{R}} 
\newcommand{\MM}{\mathcal{M}} 
\newcommand{\TT}{\mathcal{T}} 
\newcommand{\Grig}{G^{\rig}}
\newcommand{\quot}[2]{\left. \raisebox{0.5ex}{$\displaystyle #1$} \middle/ \raisebox{-0.5ex}{$\displaystyle #2$} \right.}
\newcommand{\stackquot}[2]{\left[ \raisebox{0.5ex}{$\displaystyle #1$} \middle/ \raisebox{-0.5ex}{$\displaystyle #2$} \right]}
\newcommand{\kk}{\mathbf{k}} 
\newcommand{\pt}{\mathsf{pt}} 
\newcommand{\BB}{\mathcal{B}} 
\newcommand{\Gm}{\mathbb{G}_m}
\newcommand{\thickslash}{\mathbin{\!\!\pmb{\fatslash}}} 
\DeclareMathOperator{\rk}{rk}
\DeclareMathOperator{\coker}{coker}
\DeclareMathOperator{\Hom}{Hom}
\DeclareMathOperator{\QCoh}{QCoh}
\DeclareMathOperator{\id}{id}
\let\div\relax
\DeclareMathOperator{\div}{div}
\DeclareMathOperator{\Pic}{Pic}
\DeclareMathOperator{\WDiv}{WDiv}
\DeclareMathOperator{\Cl}{Cl}
\DeclareMathOperator{\Spec}{Spec}
\DeclareMathOperator{\chara}{char}
\DeclareMathOperator{\Dic}{Dic} 
\DeclareMathOperator{\sm}{sm} 
\DeclareMathOperator{\irr}{irr} 
\newcommand{\genby}[1]{{\langle #1 \rangle}} 
\newcommand{\Genby}[1]{{\left\langle #1 \right\rangle}} 
\newcommand{\et}{\text{\'et}} 
\newcommand{\Het}{H_{\et}} 
\newcommand{\coloneqq}{\mathrel{\mathop:}=}
\newcommand{\into}{\hookrightarrow}
\newcommand{\onto}{\twoheadrightarrow}
\newcommand{\blank}{\,\text{--}\,} 
\newcommand{\git}{/\!\!/}
\newtheorem{theorem}{Theorem}[section]
\newtheorem*{maintheorem}{Main Theorem}
\newtheorem*{theorem*}{Theorem}
\newtheorem{corollary}[theorem]{Corollary}
\newtheorem{proposition}[theorem]{Proposition}
\newtheorem{lemma}[theorem]{Lemma}
\newtheorem*{lemma*}{Lemma}
\theoremstyle{definition}
\newtheorem{definition}[theorem]{Definition}
\newtheorem{remark}[theorem]{Remark}
\newtheorem{example}[theorem]{Example}
\newtheorem*{example*}{Example}
\author{Andreas Hochenegger}
\author{Elena Martinengo}
\newcommand{\bib}[6]{{\bibitem{#2} #3, {\emph{#4},} #5#6.}}
\newcommand{\arXiv}[1]{{\href{http://arxiv.org/abs/#1}{\texttt{arXiv:#1}}}}
\title{Mori dream stacks}
\begin{document}

\maketitle

\begin{abstract}
We propose a generalisation of Mori dream spaces to stacks.
We show that this notion is preserved under root constructions and taking abelian gerbes.
Unlike the case of Mori dream spaces, such a stack is not always given as a quotient of the spectrum of its Cox ring by the Picard group. We give a criterion when this is true in terms of Mori dream spaces and root constructions. Finally, we compare this notion with that of smooth toric Deligne-Mumford stacks.
\end{abstract}

\tableofcontents 

\section*{Introduction}
\addtocontents{toc}{\protect\setcounter{tocdepth}{1}}

In \cite{Hu-Keel}, Yi Hu and Se\'an Keel introduce the notion of a Mori dream space, which is a projective variety satisfying a number of properties ensuring that the Minimal Model Program works nicely. 
Moreover, they characterise Mori dream spaces as exactly those projective varieties, which can be written as a GIT~quotient of the spectrum of its \emph{Cox ring} $\RR(X)$ by $\Hom(\Cl(X),\CC^*)$.
We want to note here, that the unstable locus of $\Spec \RR(X)$ is 
the so-called \emph{irrelevant locus} $V(J_{\irr})$, where all global sections of an ample line bundle on $X$ vanish.
In [loc.~sit.], $\Cl(X)$ is assumed to be free of finite rank.
In \cite{Hausen}, J\"urgen Hausen extends the definition to include the case where $\Cl(X)$ is just finitely generated.

In this paper, we propose a definition of Mori dream stacks as a generalisation of Mori dream spaces. For us, a \emph{Mori dream stack} $\XX$ (or \MD-stack for short) is a smooth Deligne-Mumford stack with only constant invertible functions, finitely generated Picard group $\Pic(\XX)$ and finitely generated Cox ring $\RR(\XX)$. 
It turns out that an \MD-stack $\XX$ is not automatically of the form
\[ \XX = \stackquot{\Spec \RR(\XX) \setminus V(J_{\irr}) }{ \Hom(\Pic(\XX),\kk^*) },  \]
as in the case of varieties. If $\XX$ is such a quotient, we call it an \emph{\MD-quotient stack}.
Without the smoothness of $\XX$, most of our results on \MD-quotient stacks will not hold; e.g., see Example~\ref{ex:dicyclic}. Moreover, we will also need that the coarse moduli space of $\XX$ has at most quotient singularities.
  
We prove that \MD-stacks are preserved by classical constructions for stacks, like the root constructions with simple normal crossing divisors and by taking abelian gerbes; see Theorems \ref{theorem.divisor root preserves MD} and \ref{thm.gerbes preserve MD}. \MD-quotient stacks are also preserved by these root constructions. However, they are not preserved by taking arbitrary abelian gerbes, only by those which come from taking roots of line bundles.

The main result of this paper is the following characterisation of \MD-stacks and of the subclass of \MD-quotient stacks. Let $\XX$ be a Deligne-Mumford stack and $\XX \to X$ be the structure morphism to its \emph{coarse moduli space} $X$. This morphism factors through $\XX \to \Xcan \to X$, where $\Xcan$ is the \emph{canonical stack} i.e., the minimal orbifold having $X$ as its coarse moduli space. 
In the following see Definition~\ref{def:mdspace} for the notion of an \MD-space.

\begin{maintheorem}[Theorems \ref{thm.moduli space is MD}, \ref{thm:main_thm} and Corollary \ref{cor.all stacks with a MD moduli space are MD}]
Let $\XX$ be a smooth Deligne-Mumford stack and $\XX \to X$ be the structure morphism to its coarse moduli space $X$. 
\begin{enumerate}
\item If $\XX$ is an \MD-stack, then $X$ is an \MD-space. Moreover, if $\XX$ has abelian generic stabiliser and a ramification divisor with simple normal crossing components, then the converse holds too.
\item $\XX$ is an \MD-quotient stack
if and only if $X$ is an \MD-space with the property that $\Spec \RR(X) \setminus V(J_{\irr})$ is smooth and $\XX$ can be obtained from $\Xcan$ by root constructions with simple normal crossing divisors and line bundles.
\end{enumerate}
\end{maintheorem}   

The easiest class of Mori dream spaces are toric varieties. They are characterised among the Mori dream spaces by the property that their Cox rings are polynomial rings. Moreover a Mori dream space can be embedded as a closed subvariety in a toric variety in such a way that their Picard groups are naturally isomorphic; see \cite{Hu-Keel}.
In \cite{Borisov-etal}, Lev Borisov, Linda Chen and Greg Smith introduce toric stacks. 
In the later work \cite{Fantechi-etal}, Barbara Fantechi, \'Etienne Mann and Fabio Nironi give a geometric definition of a smooth toric
Deligne-Mumford stack with an action of a Deligne-Mumford torus. 
They show that any such stack can be obtained from its coarse moduli space, which is again toric, by root constructions along torus-invariant divisors and line bundles.
Many of our results are a generalisation of their ideas to the \MD-setting.

The last part of our article is dedicated to the relation between \MD-quotient stacks and smooth toric Deligne-Mumford stacks. As a generalisation of \cite{Hu-Keel} mentioned above, we show the following theorem.
\begin{theorem*}[Theorems~\ref{thm. MDquotient Cox ring polyn is toric} and \ref{thm. MDquotient embedded in toric}]
Let $\XX$ be an \MD-quotient stack. Then:
\begin{enumerate}
\item $\XX$ is a smooth toric Deligne-Mumford stack if and only if its Cox ring is a polynomial ring.
\item If the divisor class group of the coarse moduli space $X$ of $\XX$ is free, then there is a closed embedding $\iota\colon\XX \into \YY$ into a smooth toric Deligne-Mumford stack $\YY$ such that $\iota^*\colon \Pic(\YY) \xrightarrow{\sim} \Pic(\XX)$ is an isomorphism.
\end{enumerate}
\end{theorem*}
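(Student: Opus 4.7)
\emph{Part (1).} For the forward direction, by the characterisation of smooth toric Deligne--Mumford stacks in \cite{Fantechi-etal, Borisov-etal}, such a stack $\YY$ admits a presentation $\YY = [\AA^n \setminus V / G]$ with $G$ a diagonalisable group acting linearly on $\AA^n$ and $V$ a union of coordinate subspaces of codimension at least two. Since global sections of $\OO$ extend across $V$, the Cox ring $\RR(\YY) = \kk[x_1, \ldots, x_n]$ is polynomial. For the converse, suppose $\XX$ is an \MD-quotient stack with $\RR(\XX) = \kk[x_1, \ldots, x_n]$ polynomial. Then $\Spec \RR(\XX) = \AA^n$ carries the linear action of $G = \Hom(\Pic(\XX), \kk^*)$ induced by the $\Pic(\XX)$-grading, and $\XX = [\AA^n \setminus V(J_{\irr}) / G]$ is a smooth toric DM stack.

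\emph{Part (2).} The plan is to adapt the Hu--Keel embedding to the stacky setting. Pick a finite $\Pic(\XX)$-homogeneous generating set $f_1, \ldots, f_n$ of $\RR(\XX)$; this produces a surjective $\Pic(\XX)$-graded algebra homomorphism
\[ \varphi \colon \kk[x_1, \ldots, x_n] \twoheadrightarrow \RR(\XX), \]
and hence a $G$-equivariant closed immersion $\Spec \RR(\XX) \hookrightarrow \AA^n$, where $G = \Hom(\Pic(\XX), \kk^*)$. Choose a homogeneous lift $V' \subset \AA^n$ of the irrelevant locus $V(J_{\irr})$, and set $\YY \coloneqq [\AA^n \setminus V' / G]$. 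Passing to the quotient stacks produces a closed embedding $\iota \colon \XX \hookrightarrow \YY$. Both $\Pic(\YY)$ and $\Pic(\XX)$ are naturally identified with the character lattice $\Hom(G, \kk^*) = \Pic(\XX)$, so $\iota^*$ is tautologically an isomorphism.

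The main technical obstacle is verifying that $\YY$ is a smooth toric DM stack: one must show that $V'$ can be arranged to be a union of coordinate subspaces of codimension at least two, that the $G$-action on $\AA^n \setminus V'$ has finite stabilisers, and that the resulting presentation fits the stacky fan framework of \cite{Borisov-etal}. The freeness of $\Cl(X)$ enters precisely here: it allows us to invoke \cite{Hu-Keel} to embed the coarse moduli space $X$ into a smooth toric variety $Z$ with $\Pic(Z) \xrightarrow{\sim} \Pic(X)$. Combined with the Main Theorem, which presents $\XX$ as an iterated root stack of $\Xcan$ along simple normal crossing divisors and line bundles, one then obtains $\YY$ as the analogous iterated root stack of $Z$ along torus-invariant divisors, which is smooth toric by \cite{Fantechi-etal}, with the Picard-preserving embedding $\XX \hookrightarrow \YY$ as desired.
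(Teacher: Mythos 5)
The more serious gap is in part (1), in the direction ``polynomial Cox ring $\Rightarrow$ toric''. You write that $\XX=[\AA^n\setminus V(J_{\irr})/G]$ ``is a smooth toric DM stack'', but that is precisely what has to be proved: one must exhibit a Deligne--Mumford torus acting with an open dense orbit. The paper's proof supplies exactly this: the big torus $T=(\kk^*)^n$ acts on $\AA^n=\Spec\RR(\XX)$; the presentation $\ZZ^n\onto\Pic(\XX)$, $e_i\mapsto[D_i]$ with $D_i=\{x_i=0\}$, realises $G=\Hom(\Pic(\XX),\kk^*)$ as a subgroup of $T$; and --- the key step you omit --- $V(J_{\irr})$ is $T$-invariant, because an ample line bundle can be taken of the form $\OO(\sum a_iD_i)$, so its global sections have a monomial basis and $J_{\irr}$ is a monomial ideal. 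Only then does $T$ act on $Z=\AA^n\setminus V(J_{\irr})$, and $[T/G]$ is a Deligne--Mumford torus (its coarse space is a torus, so \cite{Fantechi-etal} applies) sitting as an open dense substack of $[Z/G]$. Without the torus-invariance of $V(J_{\irr})$ your converse is an assertion, not a proof. For the forward direction the paper simply cites \cite[Thm.~7.7]{Fantechi-etal}; your sketch is consistent with that, though identifying the grading group of the standard toric presentation with $\Pic(\YY)$ is exactly what that theorem provides.

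For part (2), your final argument is essentially the paper's: use freeness of $\Cl(X)$ to embed the coarse moduli space into a toric variety with isomorphic class group (the paper cites Berchtold--Hausen rather than Hu--Keel), pass to canonical stacks, and repeat the root constructions on the toric side. Two points need repair. First, the opening construction $\YY=[\AA^n\setminus V'/G]$ from an arbitrary homogeneous generating set should be discarded rather than merely flagged: there is no reason for $V'$ to be a union of coordinate subspaces, so $\YY$ need not be toric, and if $V'$ has a divisorial component then $H^0(\AA^n\setminus V',\OO^*)\neq\kk^*$ and the ``tautological'' identification $\Pic(\YY)\cong\Hom(G,\kk^*)$ breaks down. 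Second, to perform ``the analogous root constructions'' on the toric side you must know that the divisors along which $\XX$ is obtained from $\Xcan$ extend to torus-invariant divisors of the ambient toric variety; the paper arranges this by choosing the presentation $\RR(X)=\kk[x_1,\ldots,x_m]/I$ so that those divisors occur among the coordinate divisors $\{x_i=0\}$. You assert the conclusion without making this arrangement.
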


\subsection*{Notation and Conventions}
In this article, $\kk$ will always be an algebraically closed field of characteristic $0$,
since most results on Mori dream spaces and GIT quotients are only available in this setting.
A stack is always meant to be algebraic, separated and of finite type over $\kk$.
We work in the \'etale topology.

\subsection*{Acknowledgements}

The authors would like to thank Nicola Pagani and Angelo Vistoli for answering our questions; Barbara Fantechi, Flavia Poma and Fabio Tonini for very helpful discussions and explanations; Cinzia Casa\-grande for stimulating questions; and again Fabio Tonini for reading carefully a preliminary version of this article.
Finally, we want to thank the anonymous referee for helpful comments.

\addtocontents{toc}{\protect\setcounter{tocdepth}{2}}

\section{Preliminaries}

\subsection{Mori dream spaces and Cox rings}
\label{sec:md_spaces}

Our definition of a Mori dream space will differ from the standard terminology coming from the Minimal Model Program. To avoid confusion, we will always use the abbreviation \MD-space when referring to the definition given below.
The reason is that we rely on a quotient description of such a variety, which comes in handy when passing to stacks.
Therefore, our definition 
will be based on the characterisation of projective, $\QQ$-factorial Mori dream spaces as GIT quotients as given in \cite[Prop.~2.9]{Hu-Keel},
and the more general one of a variety which is a GIT quotient of the spectrum of its (finitely generated) Cox ring as found in~\cite{Coxrings}.

\begin{definition}
\label{def:mdspace}
A normal variety $X$ will be called an \emph{\MD-space} if
\begin{enumerate}
\item $X$ has at most quotient singularities,
\item $H^0(X,\OO_X^*) = \kk^*$,
\item $\Cl(X)$ is finitely generated as a $\ZZ$-module,
\item its Cox ring $\RR(X)$ is finitely generated as a $\kk$-algebra.
\end{enumerate}
\end{definition}

\begin{remark}
Note that a variety has by definition \emph{quotient singularities} if there exists an analytical open cover by sets of the form $U/G$, where $U$ is smooth and $G$ a finite group. Consequently, such a variety is $\QQ$-factorial.
Moreover, this condition will allow us to associate to $X$ a smooth stack in a canonical way.

If $X$ in the definition is moreover projective, then our definition is a slightly more restrictive than the standard one, in which $\QQ$-factorial singularities are allowed; see~\cite[Prop.~2.9]{Hu-Keel}.
\end{remark}

Let $X$ be an \MD-space. For us the most important feature is that $X$ has a description as a GIT quotient of the spectrum of its Cox ring $\RR(X)$ by $\Hom(\Cl(X),\kk^*)$.
Let us introduce all the ingredients necessary to understand this quotient description. 
The Cox ring can be na\"ively defined as the $\Cl(X)$-graded vector space
\[
\RR(X) = \bigoplus_{L \in \Cl(X)} H^0(X,L),
\]
but as pointed out in \cite{Hu-Keel}, this definition does not admit a canonical ring structure. To obtain one in the case when $\Cl(X)$ is free, one has to fix a basis of $\Cl(X)$. Then the product can be defined by multiplying the sections as rational functions.

For an arbitrary finitely generated $\Cl(X)$, the definition of a ring structure is a bit more technical; see \cite{Hausen}. 
In the following presentation we will follow \cite[Sec.~I.4]{Coxrings}. 
 
Choose a finite number of divisors $D_1,\ldots,D_m$, such that the classes $[D_1],\ldots,[D_m]$ generate $\Cl(X)$. Denote by $K$ the free subgroup of the group of Weil divisiors  
$\WDiv(X)$ generated by these divisors. 
We hence have a surjection $K \onto \Cl(X)$. 
If we write $K^0$ for its kernel, then we obtain a short exact sequence  $K^0 \into K \onto \Cl(X)$ with $K,K^0$ free groups. 
Finally, since the composition of $K^0 \into K \into \WDiv(X)$ and $\WDiv(X) \onto \Cl(X)$ is zero, the map $K^0 \into \WDiv(X)$ has to factor uniquely through $\kk(X)^*$, that is, the kernel of $\WDiv(X)\onto \Cl(X)$. So we obtain a map $\chi\colon K^0 \into \kk(X)^*$. In particular, for a principal divisor $E \in K^0$ we have $E = \div \chi(E)$. 
 
Consider the sheaf of $\Cl(X)$-graded rings $\mathcal{S} = \bigoplus_{D \in K} \OO_X(D)$ containing the homogeneous ideal $\mathcal{I} = \genby{1 - \chi(E) \mid E \in K^0}$. 
 
\begin{definition} 
The \emph{Cox ring} of $X$ is the $\Cl(X)$-graded ring 
\[ 
\RR(X) \coloneqq H^0(X,\mathcal{S}) / H^0(X,\mathcal{I}). 
\]  
\end{definition}

We note that the definition of the Cox ring depends on the choices we made above. But for different choices, the resulting rings will be (non-canonically) isomorphic.

The $\Cl(X)$-grading of the Cox ring $\RR(X)$ is equivalent to an action of $\Hom(\Cl(X),\kk^*)$ on $\Spec \RR(X)$. 
Actually, $X$ arises as a GIT quotient by this action in the following way.

\begin{proposition}[{\cite[Prop.~2.9]{Hu-Keel}, \cite[Sec.~I.6]{Coxrings}}] \label{prop.MD space as quotient}
Let $X$ be an \MD-space and $L$ an ample line bundle on $X$, giving a character $\chi\colon \Hom(\Cl(X),\kk^*) \to \kk^*$.
Then $X$ is isomorphic to the GIT quotient $\Spec \RR(X) \git_\chi \Hom(\Cl(X),\kk^*)$.
More explicitly, 
define the \emph{irrelevant ideal} $J_{\irr} = J_{\irr}(X) \coloneqq \sqrt{\genby{H^0(\XX,L)}}$, that turns out to be independent of the choice of the ample line bundle $L$.
Then we obtain
\[
X = \quot{\Spec \RR(X) \setminus V(J_{\irr})}{\Hom(\Cl(X),\kk^*)}.
\]
\end{proposition}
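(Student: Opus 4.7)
The plan is to realise $\RR(X)$ as the homogeneous coordinate ring for the GIT quotient by regarding its $\Cl(X)$-grading as an action of the diagonalisable group $G \coloneqq \Hom(\Cl(X),\kk^*)$ on $\Spec \RR(X)$, and then to identify the algebra of $\chi$-semi-invariants with the section ring of $L$.

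First, I would unwind the construction of $\RR(X)$ via $\SS = \bigoplus_{D \in K} \OO_X(D)$ and $\II = \genby{1 - \chi(E)}$ to get, for each class $\alpha \in \Cl(X)$, a canonical identification $\RR(X)_\alpha \cong H^0(X,\OO_X(D))$ whenever $[D]=\alpha$, and to check that this is compatible with multiplication. This step is where I expect the main subtlety: when $\Cl(X)$ has torsion, the isomorphism only exists after passing to the quotient by $H^0(X,\II)$, and one must verify that the multiplication induced from $\SS$ is independent of representative divisors. I would take this from \cite[Sec.~I.4]{Coxrings}. Specialising to the ample class $\alpha = n[L]$, one gets $\RR(X)_{n[L]} \cong H^0(X,L^{\otimes n})$ as graded rings in $n$.

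Next, I would apply the definition of GIT quotient for the linearisation of $\OO_{\Spec \RR(X)}$ by the character $\chi$:
\[
\Spec \RR(X) \git_\chi G \;=\; \Proj \bigoplus_{n \geq 0} \RR(X)_{n[L]} \;\cong\; \Proj \bigoplus_{n \geq 0} H^0(X,L^{\otimes n}) \;\cong\; X,
\]
the last isomorphism being ampleness of $L$. By construction, the semistable locus is precisely the complement of the vanishing locus of all $\chi^n$-semi-invariants, which is $V(J_{\irr})$ with $J_{\irr}$ as defined. To upgrade the categorical quotient to a geometric one, I would use that under the MD-hypotheses ($\QQ$-factoriality from quotient singularities, together with finite generation) the $G$-orbits in the semistable locus are closed of maximal dimension, so that semistable coincides with stable; this is carried out in \cite[Sec.~I.6]{Coxrings}.

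Finally, for the independence of $J_{\irr}$ from the choice of ample $L$, I would argue that if $L'$ is another ample line bundle, then some positive power of each dominates an effective multiple of the other, so a suitable power of any section of $L$ is divisible, inside $\RR(X)$, by some section of a power of $L'$ and vice versa. Taking radicals absorbs the powers, giving $\sqrt{\genby{H^0(X,L)}} = \sqrt{\genby{H^0(X,L')}}$. Equivalently, the GIT quotient chamber decomposition shows that all ample classes lie in a single chamber, so they determine the same semistable (hence the same unstable) locus.
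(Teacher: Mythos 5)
The paper offers no proof of this proposition: it is imported as stated from \cite{Hu-Keel} and \cite{Coxrings}, so there is no in-text argument to compare against. Your sketch is a faithful reconstruction of the standard proof from those sources: identify the graded pieces $\RR(X)_{n[L]}$ with $H^0(X,L^{\otimes n})$, compute the GIT quotient for the character $\chi$ as $\operatorname{Proj}$ of the section ring of $L$, and use $\QQ$-factoriality (from the quotient-singularities hypothesis) to promote the good quotient on the semistable locus to a geometric one, exactly as in \cite[Sec.~I.6]{Coxrings}. The one step I would not accept as written is the divisibility argument for the independence of $J_{\irr}$: it is not true in general that a power of a section of $L$ is divisible in $\RR(X)$ by a section of a power of $L'$ --- for instance on $X=\PP^1\times\PP^1$ with $L=\OO(1,2)$, $L'=\OO(2,1)$ and $s$ a section with irreducible zero divisor, any factor of $s^N$ has class a nonnegative multiple of $[L]$ in $\Cl(X)$, never a positive multiple of $[L']$. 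Your alternative argument --- that all ample classes lie in a single GIT chamber and hence determine the same unstable locus --- is the correct one and is what \cite{Hu-Keel} actually proves; equivalently, \cite{Coxrings} identifies $\Spec\RR(X)\setminus V(J_{\irr})$ intrinsically as the relative spectrum of the Cox sheaf over $X$, which makes the independence of the choice of $L$ manifest.
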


\subsection{Stacks}
We recall that we assume stacks to be algebraic, separated and of finite type over $\kk$.
Most of the constructions of this section will hold also if the stacks are not Deligne-Mumford.   
In this section we recall some basic facts on stacks, that will be useful for the following. 

\smallskip

The first notion we need is that of a coarse moduli space. The main references for us are \cite{Laumon-etal}, \cite{Olsson} and \cite{Vistoli}.   

\begin{definition}
 Let $\XX$ be a stack. A \emph{coarse moduli space} for $\XX$ is an algebraic space $X$ over $\kk$ with a morphism $\pi\colon \XX \to X$, such that:
 \begin{enumerate}
  \item the following universal property holds: for every morphism $\psi\colon \XX \to Z$ to an algebraic space, there exists a unique morphism $f\colon X \to Z$, 
  such that $\psi= f \circ \pi$; 
  \item \label{it:iso_pts} the map $|\XX(\kk)| \to X(\kk)$ is bijective, where $|\XX(\kk)|$ denotes the set of isomorphism classes in $\XX(\kk)$.
 \end{enumerate}
\end{definition}

The following theorem ensures the existence of coarse moduli spaces.

\begin{proposition} [{\cite{Keel-Mori}, \cite[Thm.~1.1]{Conrad}}]  
\label{prop:keel_mori}
Let $\XX$ be a (separated) Deligne-Mumford stack.
Then there exists a coarse moduli space $\pi \colon \XX \to X$ which is proper and quasi-finite.
Moreover, if $X'\to X$ is a flat map of algebraic spaces, then $\XX \times_X X' \to X'$ is a coarse moduli space, too.
\end{proposition}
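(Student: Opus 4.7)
The approach is to establish an étale local structure theorem for separated Deligne-Mumford stacks and then glue the local coarse moduli spaces. Let $\XX$ be separated and Deligne-Mumford of finite type over $\kk$. The starting point is Abramovich-Vistoli style local structure: for any geometric point $x\in\XX(\kk)$ with (necessarily finite) automorphism group $G_x$, one can produce an étale neighbourhood $[\Spec(A)/G_x] \to \XX$. Separatedness is essential here because it forces the inertia stack $I\XX \to \XX$ to be finite and unramified, and it is this finiteness that upgrades a slice-style construction to an étale chart of groupoid form.

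Given such a chart, the candidate for the local coarse moduli space is $\Spec(A^{G_x})$. Since $\chara(\kk)=0$, the action of the finite group $G_x$ on $A$ is tame, so $A^{G_x}$ is finitely generated over $\kk$ by classical invariant theory, the morphism $\Spec(A)\to\Spec(A^{G_x})$ is finite, and $\Spec(A^{G_x})$ is the categorical quotient. Bijectivity on $\kk$-points is then immediate: $G_x$-orbits in $\Spec(A)(\kk)$ correspond exactly to points of $\Spec(A^{G_x})$.

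The central step is gluing these local coarse spaces into an algebraic space $X$. Given two charts $[\Spec(A_1)/G_1]$ and $[\Spec(A_2)/G_2]$ mapping étale to $\XX$, one forms the fibre product groupoid and must show that on the level of invariants the transition data define an étale equivalence relation on $\coprod_i \Spec(A_i^{G_i})$. The key input is that formation of invariants under a tame finite group action commutes with flat, in particular étale, base change. This ensures the invariant rings of overlapping charts agree, producing an étale atlas for an algebraic space $X$ together with a morphism $\pi\colon\XX\to X$. The universal property then follows by étale descent: any morphism $\psi\colon\XX\to Z$ to an algebraic space is determined by its pullbacks to charts, and $G_x$-invariant morphisms $\Spec(A)\to Z$ factor uniquely through $\Spec(A^{G_x})$. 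Properness and quasi-finiteness of $\pi$ reduce locally to finiteness of $\Spec(A)\to\Spec(A^{G_x})$. The flat base change statement likewise reduces to the same compatibility of invariants with flat base change.

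The main obstacle is precisely this gluing step: verifying that the étale groupoid presentation of $\XX$ descends to an étale groupoid presentation on the candidate local coarse spaces. All the subtlety of the Keel-Mori theorem is packaged into showing compatibility of invariants with étale base change for a not-necessarily-flat but finite and tame groupoid, which is nontrivial outside characteristic zero and which, even in our characteristic-zero setting, requires care in the non-affine étale neighbourhoods. Once this is in place, the remaining verifications are essentially formal consequences of the local model $[\Spec(A)/G] \to \Spec(A^G)$.
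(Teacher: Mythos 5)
The paper does not prove this proposition; it is quoted from Keel--Mori and Conrad, so the only meaningful comparison is with the argument in those references. Your architecture matches it: étale charts of the form $[\Spec(A)/G_x]$ (which exist because separatedness forces finite inertia), local coarse spaces $\Spec(A^{G_x})$, gluing, and then the universal property, properness, quasi-finiteness, and flat base change checked locally. The final assertion about flat base change is indeed exactly the statement that invariants of a tame finite group action commute with flat base change along $\Spec(B)\to\Spec(A^{G})$, so that part of your reduction is right.

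There is, however, a genuine gap in the gluing step: the lemma you name as the ``key input'' is not the one that makes the gluing work. Given two charts, the overlap is $[W/G_1]$ with $W\to\Spec(A_1)$ an étale \emph{equivariant} map; it is not presented as a flat base change from $\Spec(A_1^{G_1})$, so commutation of invariants with flat base change does not apply to it. What you actually need is the converse direction (Luna's fundamental lemma for finite groups): a $G$-equivariant étale map $U\to V$ induces an étale map $U/G\to V/G$ with $U\cong V\times_{V/G}U/G$ --- and this is \emph{false} without the hypothesis that the map is stabiliser-preserving (fixed-point reflecting). For example, let $G=\ZZ/2\ZZ$ act on $V=\AA^1$ by $x\mapsto -x$ and on $U=\AA^1\sqcup\AA^1$ by swapping the two components, with the equivariant étale map $U\to V$ given by $x\mapsto x$ on one copy and $x\mapsto -x$ on the other; then $U/G\to V/G$ is $x\mapsto x^2$ on $\AA^1$, which is not étale at the origin. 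So the proof must in addition arrange the charts $[\Spec(A)/G_x]\to\XX$ to be stabiliser-preserving near the point of interest and invoke Luna's lemma on the overlaps; you flag the subtlety but the tool you supply for it would not close it. Everything else in the sketch is the standard argument.
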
 

The next proposition actually characterises \emph{tame moduli spaces}, but in our situation the two notions coincide since $\chara \kk =0$.
Note that since we assume our stacks and varieties to be of finite type over $\kk$,
maps between them are automatically quasi-compact.

\begin{proposition}[{\cite[Thm.~3.2]{Abramovich-Olsson-Vistoli},\cite{Keel-Mori},\cite[Rem.~7.3]{Alper}}]
\label{prop:properties_pi_coarse}
Let $\pi\colon\XX \to X$ be the natural map to the coarse moduli space. Then 
\begin{enumerate}
\setcounter{enumi}{2}
\item \label{it:exact} $\pi_*\colon\QCoh  \XX \to \QCoh X$ is exact;
\item \label{it:struct} there is a natural isomorphism $\OO_X \to \pi_* \OO_\XX$.
\end{enumerate}
Conversely, if $\pi\colon\XX\to X$ satisfies the conditions \eqref{it:iso_pts}, \eqref{it:exact} and \eqref{it:struct}, 
then $\pi$ is a coarse moduli map.
\end{proposition}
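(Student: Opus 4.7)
The plan is to prove the two implications separately, exploiting the fact that all three conditions \eqref{it:iso_pts}, \eqref{it:exact} and \eqref{it:struct} are local on $X$ in the \'etale topology.

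For the forward direction, assume $\pi\colon\XX\to X$ is a coarse moduli map as in Proposition~\ref{prop:keel_mori}. Using the flat base change statement there, and the local structure theorem for separated Deligne-Mumford stacks, I would reduce to the model case in which $U = \Spec A \to X$ is \'etale and $\XX_U \cong [\Spec B / G]$ for some finite group $G$ acting on $\Spec B$ with $A = B^G$. Since $\chara \kk = 0$, Maschke's averaging operator $\frac{1}{|G|}\sum_{g\in G} g$ makes the functor of $G$-invariants exact on $A[G]$-modules, which translates via the equivalence $\QCoh [\Spec B/G] \simeq \Mod_{A[G]}$ into exactness of $\pi_*$, so \eqref{it:exact} holds. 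The canonical identification $A = B^G$ provides \eqref{it:struct}.

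For the converse, suppose $\pi$ satisfies \eqref{it:iso_pts}, \eqref{it:exact} and \eqref{it:struct}. The only thing to verify is the universal property, since \eqref{it:iso_pts} is already part of the definition of a coarse moduli space. Given $\psi\colon\XX \to Z$ with $Z$ an algebraic space, I would construct the required $f\colon X \to Z$ in two steps. First, for $Z = \Spec R$ affine, $\psi$ is determined by a $\kk$-algebra map $R \to \Gamma(\XX,\OO_\XX)$; condition \eqref{it:struct} gives $\Gamma(X,\OO_X) \xrightarrow{\sim} \Gamma(\XX,\OO_\XX)$, so one obtains a ring map $R \to \Gamma(X,\OO_X)$ and hence $f\colon X \to Z$, which satisfies $f\circ \pi = \psi$ by construction, and is unique because the displayed isomorphism is canonical. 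Second, for general $Z$, fix an \'etale cover $\{V_i \to Z\}$ by affine schemes and set $\XX_i \coloneqq \XX \times_Z V_i$. Using condition \eqref{it:exact} and flat base change, the maps $\pi_i\colon \XX_i \to X_i$ inherit properties \eqref{it:iso_pts}, \eqref{it:exact}, \eqref{it:struct}, where $X_i \subseteq X$ is the open subspace with $\pi^{-1}(X_i) = \XX_i$; applying the affine case to each $\pi_i$ yields morphisms $f_i\colon X_i \to V_i$, and uniqueness on overlaps allows one to glue them to a morphism $f\colon X \to Z$ with $f\circ\pi = \psi$.

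The main obstacle lies in the second step of the converse: one must verify that $\XX_i = \psi^{-1}(V_i)$ is genuinely saturated under $\pi$ and descends to an honest open subspace $X_i \subseteq X$ with $\pi^{-1}(X_i) = \XX_i$. This is where \eqref{it:iso_pts} enters in a crucial way, ensuring together with the properness of $\pi$ (from Proposition~\ref{prop:keel_mori}) that $|X|$ is the quotient topological space $|\XX|/{\sim}$ with $\sim$ generated by the fibres of $\pi$, so that a morphism constant on fibres of $\pi$ factors topologically through $|X|$. Without this set-theoretic compatibility, the \'etale descent used to glue the $f_i$ would not be legitimate.
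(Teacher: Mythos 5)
The paper does not actually prove this proposition: it is imported wholesale from \cite[Thm.~3.2]{Abramovich-Olsson-Vistoli}, \cite{Keel-Mori} and \cite[Rem.~7.3]{Alper}, so your attempt has to stand on its own. Your forward direction is the standard argument and is essentially correct: flat base change (Proposition~\ref{prop:keel_mori}) together with the local structure theorem for separated Deligne--Mumford stacks reduces to the model $\XX = [\Spec B/G]$, $X = \Spec B^G$ with $G$ finite, where $\pi_*$ is the $G$-invariants functor, exact in characteristic zero by the averaging operator; this also gives \eqref{it:struct}. (One cosmetic slip: $\QCoh [\Spec B/G]$ is the category of $G$-equivariant $B$-modules, i.e.\ modules over the skew group algebra $B \rtimes G$, not $\Mod_{A[G]}$; the exactness conclusion is unaffected.) The affine case of the converse is also correct.

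The genuine gap is in the globalization step of the converse. First, an \'etale cover $\{V_i \to Z\}$ of an algebraic space is not an open cover: $\XX \times_Z V_i \to \XX$ is \'etale but not an open immersion, so there is no open subspace $X_i \subseteq X$ with $\pi^{-1}(X_i) = \XX \times_Z V_i$, and the notation $\psi^{-1}(V_i)$ is not defined for such $V_i$. Since a general algebraic space admits no Zariski cover by affine schemes, you cannot simply replace ``\'etale'' by ``open''; the reduction from algebraic spaces to affine schemes is precisely the substantive content of \cite[Thm.~6.6]{Alper} and of \cite{Conrad}, and it rests on first showing, using \eqref{it:exact} and \eqref{it:struct}, that $\pi$ carries closed substacks to closed subspaces, plus a stratification of $Z$ by schemes. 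Second, your proposed repair invokes ``the properness of $\pi$ (from Proposition~\ref{prop:keel_mori})'', but that proposition asserts properness of the coarse moduli map --- which is exactly what the converse is trying to prove $\pi$ to be. In the converse you are given only \eqref{it:iso_pts}, \eqref{it:exact} and \eqref{it:struct}, and the closedness (or submersiveness) of $|\pi|$ must be deduced from these rather than assumed; as written the descent argument is circular. A cleaner route would be to factor $\pi = g \circ \pi_0$ through the Keel--Mori coarse space $\pi_0\colon \XX \to X_0$ via the universal property of $X_0$, and then prove that $g$ is an isomorphism using \eqref{it:iso_pts} and \eqref{it:struct}; but that step, too, requires an argument you have not supplied.
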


Now we recall the concept of the rigidification of a stack. We refer to \cite{Abramovich-Corti-Vistoli}, \cite{Abramovich-Olsson-Vistoli} and \cite{Fantechi-etal} for details.

Let $\XX$ be a stack. The \emph{inertia stack} of $\XX$ is defined to be the fibre product $I(\XX) \coloneqq \XX \times_{\XX \times \XX} \XX$ . 
The inertia stack of a smooth Deligne-Mumford stack is smooth, but will consist of several components, not all necessarily of the same dimension. Let $I(\XX) \to \XX$ be the natural projection. Its identity section gives an irreducible component canonically isomorphic to $\XX$; all other components are called \emph{twisted sectors}. 

A smooth Deligne-Mumford stack of dimension $d$ is an \emph{orbifold} if and only if all the twisted sectors have dimension at most $d-1$, and is \emph{canonical} if and only if all twisted sectors have dimension at most $d-2$. 

The canonical stack can be defined also as the smooth Deligne-Mumford stack $\Xcan$, whose natural map to the coarse moduli space $\pi\colon \Xcan \to X$ fails to be an isomorphism just on a locus of dimension $\leq d-2$. 
Note that $\Xcan$ is unique up to isomorphism with this property; see~\cite[Thm.~4.6]{Fantechi-etal}.

The \emph{generic stabiliser} $I^{gen}(\XX)$ of $\XX$ is the subsheaf of groups of $I(\XX)$ given by the union of all $d$-dimensional components of $I(\XX)$. 

Let now $H \subset I(\XX)$ be a flat subgroup stack. 
Let $\xi \in \XX(T)$ be an object of the stack $\XX$ over a $\kk$-scheme $T$, which corresponds to a morphism $T \to \XX$. 
The pullback $T \times_{\XX} I(\XX)$ is canonically isomorphic to 
the group scheme $\mathcal{A}ut_{T,\XX}(\xi)$ and the pullback of $H$ to $T$ gives a flat subgroup scheme 
$H_\xi \subset \mathcal{A}ut_{T,\XX}(\xi)$.
It is easy to see that the construction implies that $H_\xi$ is normal in $\mathcal{A}ut_{T,\XX}(\xi)$.
In this situation the following theorem defines the \emph{rigidification} of $\XX$ by $H$. 

\begin{proposition}[{\cite[Thm~A.1.]{Abramovich-Olsson-Vistoli}}]  \label{prop.rigidification}
There is a stack $\XX \thickslash H$ called the \emph{rigidification} of $\XX$ by $H$ and 
a morphism $\nu\colon \XX \to  \XX \thickslash H$ satisfying the following properties:
\begin{enumerate}
\item for any $\kk$-scheme $T$ and for any object $\xi \in \XX(T)$, the homomorphism of group schemes 
$\mathcal{A}ut_{T,\XX}(\xi) \to \mathcal{A}ut_{T,\XX \thickslash H}(\nu(\xi))$ is surjective with kernel $H_\xi$, 
\item $\XX$ is a fppf gerbe over $\XX \thickslash H$
(for the definition of gerbe, see Section~\ref{sec:gerbes}).
\end{enumerate}
Furthermore, if $\XX$ is a Deligne-Mumford stack, then also $\XX \thickslash H$ is a Deligne-Mumford stack.
\end{proposition}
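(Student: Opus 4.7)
The plan is to construct $\XX \thickslash H$ by first quotienting automorphisms at the level of a prestack and then stackifying. First I would introduce a prestack $\XX^{pre}$ over $\kk$-schemes whose objects over $T$ are the objects of $\XX(T)$, and whose morphisms $\xi \to \eta$ are equivalence classes of morphisms $\phi\colon \xi \to \eta$ in $\XX(T)$, with $\phi \sim \psi$ precisely when $\phi\inv\psi \in H_\xi$. The assumption that $H$ is a flat subgroup stack of $I(\XX)$, together with the normality of $H_\xi$ in $\mathcal{A}ut_{T,\XX}(\xi)$ remarked just before the statement, ensures that this relation is well defined and preserved by pre- and post-composition, so $\XX^{pre}$ is a category fibred in groupoids. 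I would then define $\XX \thickslash H$ to be the fppf stackification of $\XX^{pre}$, with $\nu\colon \XX \to \XX \thickslash H$ the composite of the tautological functor $\XX \to \XX^{pre}$ (identity on objects, quotient on morphisms) with the canonical map to the stackification.

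Property~(1) is essentially built into the construction: for any $\xi \in \XX(T)$, the automorphism sheaf of $\nu(\xi)$ is the fppf-sheafification of the quotient presheaf $T' \mapsto \mathcal{A}ut_{T',\XX}(\xi|_{T'})/H_{\xi|_{T'}}$, which by flatness of $H_\xi$ in $\mathcal{A}ut_{T,\XX}(\xi)$ and standard fppf descent is represented by the quotient group scheme $\mathcal{A}ut_{T,\XX}(\xi)/H_\xi$. For property~(2) I would check the two conditions defining an fppf gerbe: fppf-local essential surjectivity of $\nu$ is immediate from stackification, while local isomorphy of any two lifts of a given object follows because two objects of $\XX$ become isomorphic in $\XX^{pre}$ exactly when they already are isomorphic in $\XX$, so the discrepancy between two lifts is classified by an $H$-torsor. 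Finally, if $\XX$ is Deligne--Mumford then it has unramified diagonal; since $\nu$ is an fppf gerbe banded by the flat group stack $H$ whose fibres are unramified and finite (as $H \subset I(\XX)$ and $\XX$ is DM), this unramifiedness descends and $\XX \thickslash H$ is Deligne--Mumford as well.

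The main obstacle I expect is the descent step in property~(2). Writing down $\XX^{pre}$ and stackifying it is formally straightforward, but verifying that the fppf-sheafification of the quotient automorphism presheaf is genuinely represented by a group scheme, and that morphisms in $\XX \thickslash H$ really do glue from fppf-local data pulled back from $\XX$, requires carefully tracking how $H$-torsors over $\XX$ patch together. Flatness of $H$ is indispensable at exactly this point, and this descent-theoretic computation is the technical heart of the construction carried out in \cite[App.~A]{Abramovich-Olsson-Vistoli}.
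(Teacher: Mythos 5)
This proposition is imported from \cite[Thm.~A.1]{Abramovich-Olsson-Vistoli}; the paper gives no proof of its own, only the remark immediately following the statement, which describes $\XX \thickslash H$ as the fppf stackification of the prestack obtained by quotienting each automorphism sheaf by the normal subsheaf $H_\xi$ --- exactly the construction you outline. Your sketch is therefore essentially the same approach as the paper's (and as the cited source's), with the genuinely technical points --- representability of $\mathcal{A}ut_{T,\XX}(\xi)/H_\xi$ and the fppf descent of morphisms --- correctly identified as the parts you are deferring to \cite{Abramovich-Olsson-Vistoli}.
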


It can be shown that these properties characterise $\XX \thickslash H$ uniquely up to isomorphism. 

A more intuitive way to think of the rigidification is to consider first the prestack
$(\XX \thickslash H)^{pre}$, whose objects are the same as those of $\XX$ and 
whose sheaf of automorphisms $\mathcal{A}ut_{T, (\XX \thickslash H)^{pre}}(\xi)$ of an object $\xi \in \XX(T)$ 
is the quotient of $\mathcal{A}ut_{T, \XX}(\xi)$ by the subsheaf of normal 
groups $H_\xi$. The rigidification $\XX \thickslash H$ can be obtained as the fppf stackification of this prestack.
From this construction follows that $\XX$ and $\XX \thickslash H$ have the same coarse moduli space.

In our work we are mainly interested in the rigidification of a stack by its generic stabiliser, 
which we assume to be abelian and flat.
It will be called \emph{the} rigidification of $\XX$ and indicated by $\Xrig$. 

\begin{proposition}[{\cite[Rem.~3.7]{Fantechi-etal}}]
Let $\XX$ be a Deligne-Mumford stack and $\pi\colon\XX\to X$ its coarse moduli map.
Moreover, let $D$ be a prime divisor on $X$.
Denote by $\DD$ the reduced closed substack with support $\pi^{-1}(D)$.
When restricting to the smooth locus of $X$, $D$ becomes Cartier, and there is a unique positive integer $a$ such that $\pi^{-1}(D \cap X_{\sm}) = 
a (\DD \cap \pi^{-1}(X_{\sm}))$.
\end{proposition}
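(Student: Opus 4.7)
The plan is to reduce to an \'etale-local statement on the coarse moduli space and there to exploit the fact that the quotient presentation is generically Galois, so that the finite group permutes transitively the prime divisors over $D$.

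First, since $X_{\sm}$ is smooth, every Weil divisor on it is Cartier, so $D \cap X_{\sm}$ is Cartier, and hence its scheme-theoretic preimage $\pi^{-1}(D \cap X_{\sm})$ is a Cartier divisor on the open substack $\pi^{-1}(X_{\sm}) \subseteq \XX$. I would then replace $X$ by $X_{\sm}$ and $\XX$ by $\pi^{-1}(X_{\sm})$, reducing the claim to a global equality of Cartier divisors $\pi^{-1}(D) = a\,\DD$ on $\XX$.

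Next, I would work \'etale-locally on $X$. By the local structure of Deligne--Mumford stacks over their coarse moduli spaces (a consequence of Proposition~\ref{prop:keel_mori} together with tameness in characteristic zero), around any point of $X$ there is an \'etale neighborhood on which $\XX$ is presented as a quotient stack $[U/G]$ with $U$ a smooth affine scheme and $G$ a finite group, while $U/G \to X$ is \'etale. Shrinking further I may assume $D$ is cut out on the image by a single regular function $f$; its pullback $\tilde f \in \OO(U)^G$ is $G$-invariant and cuts out $\pi^{-1}(D)$ on $[U/G]$. Decomposing $\div(\tilde f) = \sum_i a_i \tilde D_i$ into prime components on $U$, the quotient $U \to U/G$ is generically Galois (after replacing $G$ by its image in $\Aut(U)$ to kill the generic stabiliser, which does not change the stack-theoretic image), and a standard going-up argument shows $G$ acts transitively on the prime divisors $\tilde D_i$ lying over the irreducible $D$. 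Since $\tilde f$ is $G$-invariant, all $a_i$ coincide with a common value $a > 0$, whence $\pi^{-1}(D)|_U = a \sum_i \tilde D_i = a\,(\DD|_U)$, and this identity descends to $[U/G]$.

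Finally, the integer $a$ is independent of the chosen \'etale chart, because it equals the length of $\pi^{-1}(D)$ at the generic point of any component of $\DD$, an intrinsic invariant of $\pi$. Patching the local equalities yields the global identity $\pi^{-1}(D \cap X_{\sm}) = a(\DD \cap \pi^{-1}(X_{\sm}))$, and uniqueness of $a$ is immediate since $\DD \cap \pi^{-1}(X_{\sm})$ is a nonzero Cartier divisor. The main obstacle I expect is justifying the \'etale-local presentation $\XX \cong [U/G]$ with $U$ smooth and $U/G \to X$ \'etale, and then verifying transitivity of $G$ on the preimage of $D$; once these are in place the divisor-theoretic computation is routine bookkeeping with $G$-invariant functions.
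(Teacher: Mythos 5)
The paper does not prove this statement; it is quoted verbatim from \cite[Rem.~3.7]{Fantechi-etal}, so there is no in-text argument to compare yours against. Judged on its own, your proof reaches the right conclusion, but it is organised around the wrong pivot and has one soft spot. The fact that makes this a remark rather than a proposition is that $\pi\colon\XX\to X$ is a homeomorphism on underlying topological spaces (it is proper, quasi-finite and bijective on $\kk$-points, by Proposition~\ref{prop:keel_mori} and condition~\eqref{it:iso_pts}), so $\pi^{-1}(D)$ is irreducible and $\DD$ is a single prime divisor with a unique generic point $\xi$. Once you know this, the whole statement is immediate: over $X_{\sm}$ the divisor $D$ is Cartier, its pullback $\pi^{-1}(D\cap X_{\sm})$ is a Cartier divisor whose associated Weil divisor is supported on the one prime $\DD\cap\pi^{-1}(X_{\sm})$, hence equals $a\,(\DD\cap\pi^{-1}(X_{\sm}))$ with $a=\operatorname{ord}_{\xi}$ of a local equation, and $a>0$ and uniqueness are clear. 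No local charts or group actions are needed (beyond normality of $\XX$ at $\xi$ so that $\OO_{\XX,\xi}$ is a DVR --- a hypothesis you also need, and which your ``$U$ smooth'' assumption silently imports even though the proposition only assumes $\XX$ Deligne--Mumford).

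The soft spot in your version is the transitivity step. After passing to an \'etale neighbourhood $V=U/G\to X$, the prime divisor $D$ need not stay irreducible: $D\times_X V$ can break into several components, and $G$ acts transitively only on the $\tilde D_i$ lying over a \emph{fixed} component, so ``$\tilde f$ is $G$-invariant, hence all $a_i$ coincide'' does not follow as stated. Your closing paragraph --- that $a$ is the length of $\pi^{-1}(D)$ at the generic point of any component of $\DD$ --- is in fact the correct repair (multiplicities are preserved under the \'etale maps $U\to[U/G]$ and $V\to X$), but it silently assumes that this length is independent of the component, i.e.\ exactly the irreducibility of $\DD$ that you never establish. Add the observation that $|\XX|\to|X|$ is a homeomorphism and the argument closes; at that point the quotient presentation and the Galois transitivity can be deleted entirely.
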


\begin{definition}
In the notation above we call $a$ the \emph{divisor multiplicity} of $\DD$.
The \emph{ramification divisor} of $\XX$ is the sum of all prime divisors of $\XX$ with positive divisor multiplicity. 

Now let $\XX$ be an arbitrary Deligne-Mumford stack and $\nu\colon\XX\to\Xrig$ its rigidification. If $\DD^\rig$ is the ramification divisor of $\Xrig$, we call the reduced closed substack $\DD$ inside $\nu^{-1}(\DD^\rig)$ the ramification divisor of $\XX$.
\end{definition}

\begin{definition}
A line bundle $\LL$ on a stack $\XX$ is called \emph{ample} if
$H^i(\XX,\FF \otimes \LL^{\otimes n})$ vanishes for any sheaf $\FF$ on $\XX$, $i>0$ and $n \gg 0$.
\end{definition}

\begin{lemma}
\label{lem:pullback_ample_lb}
Let $\pi\colon \XX \to X$ be the map of a Deligne-Mumford stack to its coarse moduli space and $L$ an ample line bundle on $X$.
Then $\pi^* L$ is an ample line bundle on $\XX$.
\end{lemma}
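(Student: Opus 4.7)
The plan is to reduce the cohomology on $\XX$ to the cohomology on $X$ using the properties of $\pi$ established in Proposition~\ref{prop:properties_pi_coarse}, and then invoke the ampleness of $L$.

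First, let $\FF$ be an arbitrary (quasi-)coherent sheaf on $\XX$. Since $L$ is locally free on $X$, the projection formula gives a canonical isomorphism
\[
\pi_*\bigl(\FF \otimes \pi^* L^{\otimes n}\bigr) \;\cong\; \pi_* \FF \otimes L^{\otimes n}
\]
for every $n$. This is the natural identification that will let us move the twist off the stack.

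Second, I would invoke the exactness of $\pi_*$ on quasi-coherent sheaves from Proposition~\ref{prop:properties_pi_coarse}\eqref{it:exact} to conclude that the higher direct images $R^j \pi_*$ vanish for $j > 0$. Hence the Leray spectral sequence
\[
E_2^{i,j} = H^i\bigl(X, R^j\pi_*(\FF \otimes \pi^*L^{\otimes n})\bigr) \;\Longrightarrow\; H^{i+j}\bigl(\XX, \FF \otimes \pi^*L^{\otimes n}\bigr)
\]
degenerates on the $E_2$-page and yields the identification
\[
H^i\bigl(\XX, \FF \otimes \pi^*L^{\otimes n}\bigr) \;\cong\; H^i\bigl(X, \pi_*\FF \otimes L^{\otimes n}\bigr)
\]
for every $i \geq 0$ and every $n$.

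Finally, since $\pi_* \FF$ is a quasi-coherent sheaf on the algebraic space $X$ and $L$ is ample, the standard cohomological characterisation of ampleness on algebraic spaces yields $H^i(X, \pi_* \FF \otimes L^{\otimes n}) = 0$ for all $i > 0$ and all $n \gg 0$, which by the isomorphism above gives the required vanishing on $\XX$. The only mildly delicate point is justifying the vanishing of the higher direct images and the projection formula in the stacky setting; both follow from Proposition~\ref{prop:properties_pi_coarse} together with the fact that $\pi^* L$ is locally free, so I do not expect a real obstacle.
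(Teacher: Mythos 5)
Your proposal is correct and follows essentially the same route as the paper: the projection formula together with the exactness of $\pi_*$ from Proposition~\ref{prop:properties_pi_coarse}\eqref{it:exact} identifies $H^i(\XX,\FF\otimes\pi^*L^{\otimes n})$ with $H^i(X,\pi_*\FF\otimes L^{\otimes n})$, and the ampleness of $L$ finishes the argument. Your explicit appeal to the degeneration of the Leray spectral sequence is just a spelled-out version of the step the paper phrases as ``apply $H^i(X,\blank)$ on both sides.''
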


\begin{proof}
Since $L$ is locally free, we can use the projection formula to get 
$\pi_* ( F \otimes \pi^* L^{\otimes n} ) = \pi_* F \otimes L^{\otimes n}$, 
since $\pi_*$ is exact by Proposition~\ref{prop:properties_pi_coarse}\eqref{it:exact}.
Apply on both sides $H^i(X,\blank)$ this becomes 
$H^i (\XX, F\otimes \pi^* L^{\otimes n} ) = H^i (X, \pi_* F \otimes L^{\otimes n} )$. 
But the latter cohomology group vanishes for $i>0$ and $n\gg0$.
\end{proof}

\subsection{Root constructions}
\label{sec:root_constr}

In this section, we recall the definition of two root constructions as found in \cite{Cadman} and \cite{Abramovich-Graber-Vistoli} and we state some facts about these.

\begin{definition}
Let $\XX$ be a stack and $\DD$ an effective divisor on $\XX$.
Let $r$ be a positive integer.
Consider the fibre product
\[
\xymatrix@R=3ex{
\XX \times_{[\AA^1/\kk^*]} [\AA^1/\kk^*] \ar[r] \ar[d] & [\AA^1/\kk^*] \ar[d]^{r}\\
\XX \ar[r]_{\DD} & [\AA^1/\kk^*]
}
\]
where the lower map corresponds to the divisor and the  right map is induced by $p \mapsto p^r$.
The stack $\sqrt[r]{\DD/\XX} \coloneqq \XX \times_{[\AA^1/\kk^*]} [\AA^1/\kk^*]$
is the \emph{$r$-th root of the divisor $\DD$ on $\XX$}.
\end{definition}

\begin{definition}
Let $\XX$ be a stack and $\LL$ a line bundle on $\XX$.
Let $r$ be a positive integer.
Consider the fibre product
\[
\xymatrix@R=3ex{
\XX \times_{\BB \kk^*} \BB \kk^* \ar[r] \ar[d] & \BB \kk^* \ar[d]^{r}\\
\XX \ar[r]_{\LL} & \BB \kk^*
}
\]
where the lower map corresponds to the line bundle and the right map is induced by $p \mapsto p^r$.
The stack $\sqrt[r]{\LL/\XX} \coloneqq \XX \times_{\BB \kk^*} \BB \kk^*$ is the \emph{$r$-th root of the line bundle $\LL$ on $\XX$}.
\end{definition}

\begin{definition}
Let $\XX$ be a stack. We say that $\XX'$ is \emph{obtained by roots} from $\XX$, if $\XX'$ is the result of performing a finite sequence of root constructions 
with divisors or line bundles starting from $\XX$.
\end{definition}

By \cite[Ex.~7.21]{Vistoli}, a coherent sheaf on a quotient stack $\XX= [Z/G]$ is a $G$-equivariant sheaf on $Z$, i.e., a coherent sheaf $L_Z$ on $Z$ equipped, for every 
$g \in G$, with an isomorphism $\varphi_g\colon L_Z \to g^*L_Z$, such that $\varphi_{gh} = h^* \varphi_g \circ \varphi_h$. The data $\{\varphi_g\}_{g \in G}$ is called a \emph{$G$-linearisation} of $L_Z$.

In the case of invertible sheaves this leads to the following exact sequence; see \cite[Lem.~2.2]{Knop-Kraft-Vust}:
\[ 0 \to H^1_{alg}(G, \OO^*_Z) \to \Pic(\XX) \to \Pic(Z). \]
Here, the group $H^1_{alg}(G, \OO^*_Z)$ parametrises $G$-linearisations of the trivial line bundle on $Z$.
The group of $G$-linearisations is naturally linked to the group of characters of $G$ by the following exact sequence 
obtained from \cite[Prop.~2.3]{Knop-Kraft-Vust} 
\[1 \to H^0(\XX, \OO^*_{\XX}) \to H^0(Z, \OO^*_Z) \to \Hom(G, \kk^*) \to \Pic(\XX) \to \Pic(Z).   \]
This implies that, if $H^0(Z, \OO^*_Z)= \kk^*$, the first arrow is an isomorphism and the group $H_{alg}^1(G, \OO^*_Z)$ coincides with the group of characters of $G$.

In this case we can generalise {\cite[Lem.~7.1]{Fantechi-etal}}:

\begin{proposition}
\label{prop:root_description_quot}
Let $Z$ be a variety with $H^0(Z, \OO^*)=\kk^*$ and $G$ an abelian group acting on it, such that $\XX = [Z/G]$ is a Deligne-Mumford stack. 
Let $\DD$ be a divisor on $\XX$ corresponding to a line bundle $\LL$ and a section $s$. 
Assume that $\LL$ is given by the trivial line bundle on $Z$ and a character $\chi$ of $G$.  
Finally, let $r$ be a positive integer.
\begin{enumerate}
\item The stack $\sqrt[r]{\DD/\XX}$ is isomorphic to $[Z'/G']$ where both $Z'$ and $G'$ are defined as the fibre products
\[
\xymatrix@R=3ex{
Z' \ar[r] \ar[d] & \AA^1 \ar[d]^r\\
Z  \ar[r]_{s} & \AA^1
}
\qquad
\xymatrix@R=3ex{
G' \ar[r] \ar[d] & \kk^* \ar[d]^r\\
G  \ar[r]_{\chi} & \kk^*
}
\]
and both maps on the right are induced by $p \mapsto p^r$.
The action of $G'$ on $Z'$ is given by
\[
(g,\lambda) \cdot (z,t) = (g\cdot z, \lambda t).
\]
\item The stack $\sqrt[r]{\LL/\XX}$ is isomorphic to $[Z/G']$ where $G'$ is defined as above and acts on $Z$ using the map $G' \to G$.
\end{enumerate}
\end{proposition}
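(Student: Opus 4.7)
The plan is to unwind the fibre products defining the two root stacks and rewrite them as quotient stacks using the equivariant presentation of $\XX$. The key observation is that both target stacks, $\BB\kk^*$ and $[\AA^1/\kk^*]$, are themselves quotient stacks, and the morphisms appearing in the fibre products are induced by equivariant data on the chart level.

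First I would translate the classifying maps into explicit equivariant data. Since $\LL$ is the trivial line bundle on $Z$ with $G$-action through $\chi$, the classifying map $\LL\colon\XX\to\BB\kk^*$ factors as $[Z/G]\to\BB G\xrightarrow{\BB\chi}\BB\kk^*$. For the divisor, the map $\DD\colon\XX\to[\AA^1/\kk^*]$ is likewise induced by the $G$-equivariant section $s\colon Z\to\AA^1$, where $G$ acts on $\AA^1$ via $\chi$ (the equivariance $s(g\cdot z)=\chi(g)s(z)$ is exactly the condition that $s$ cut out a divisor in the class of $\LL$). The $r$-th power morphisms on both $\BB\kk^*$ and $[\AA^1/\kk^*]$ are induced by $p\mapsto p^r$ on $\kk^*$ (and, in the second case, also on $\AA^1$).

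The heart of the argument is the general principle that fibre products of quotient stacks compute compatibly when the morphisms come from equivariant data: given $[X/H]\to[Y/K]$ and $[X'/H']\to[Y/K]$ induced by pairs $(X\to Y, H\to K)$ and $(X'\to Y, H'\to K)$, there is an equivalence
\[
[X/H] \times_{[Y/K]} [X'/H'] \;\cong\; \bigl[(X\times_Y X')/(H\times_K H')\bigr],
\]
with the product group acting coordinatewise. I would verify this on $T$-points: an object of the left-hand side is a pair of principal bundles $(P\to T, f\colon P\to X)$ and $(P'\to T, f'\colon P'\to X')$ together with an isomorphism of their induced $K$-bundles over $Y$; this isomorphism allows one to glue $P$ and $P'$ into a principal $(H\times_K H')$-bundle whose equivariant map lands in $X\times_Y X'$, producing an equivalence of groupoids.

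With this in hand, I would specialise. For part~(1), take $(Y,K)=(\AA^1,\kk^*)$, $(X,H)=(Z,G)$, and $(X',H')=(\AA^1,\kk^*)$; the formula yields exactly $Z'=Z\times_{\AA^1}\AA^1$ and $G'=G\times_{\kk^*}\kk^*$ with the coordinatewise action, and the compatibility $s(gz)=\chi(g)s(z)=\lambda^r t^r=(\lambda t)^r$ confirms that this action preserves $Z'$. For part~(2), the same principle with $(Y,K)=(\pt,\kk^*)$ collapses $X\times_Y X'$ to $Z$ and leaves the same $G'$, acting on $Z$ via the projection $G'\to G$. The main technical obstacle is the general fibre product formula; it is standard descent, but requires careful bookkeeping with principal bundles and the change-of-structure-group construction to verify both the object-level bijection and functoriality of morphisms.
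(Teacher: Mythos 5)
The paper offers no proof of this proposition at all --- it is stated as a direct generalisation of \cite[Lem.~7.1]{Fantechi-etal} --- so there is no in-paper argument to compare against; I will assess your proposal on its own. Your overall strategy (rewrite both legs of the defining fibre product as morphisms of quotient stacks induced by equivariant data, then compute the fibre product equivariantly) is the natural one and does yield the stated description. The problem is that the key lemma you isolate is \emph{false as stated}: for $[X/H]\to[Y/K]$ and $[X'/H']\to[Y/K]$ induced by equivariant pairs $(f,\phi)$ and $(f',\psi)$, the natural map from $\bigl[(X\times_Y X')/(H\times_K H')\bigr]$ to $[X/H]\times_{[Y/K]}[X'/H']$ need not be an equivalence. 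Take $X=X'=Y=\pt$, $H=H'=1$ and $K$ nontrivial: your formula predicts $\pt$, whereas $\pt\times_{\BB K}\pt\cong K$. Concretely, the gluing step in your $T$-points verification is where this breaks: given torsors $P,P'$ and an isomorphism $\theta$ of the induced $K$-torsors, the locus $Q=\{(p,p')\in P\times_T P' : \theta([p,1])=[p',1]\}$ on which $H\times_K H'$ acts is, fibrewise, a fibre of the homomorphism $H\times H'\to K$, $(h,h')\mapsto\phi(h)\psi(h')^{-1}$; it is empty whenever the local transition element of $\theta$ misses the image of that map, so $Q$ is in general not a torsor. The correct general statement is $[X/H]\times_{[Y/K]}[X'/H']\cong[W/(H\times H')]$ with $W=\{(x,k,x') : f(x)=k\cdot f'(x')\}$, and this collapses to your formula precisely when $(h,h')\mapsto\phi(h)\psi(h')^{-1}$ is surjective (and smooth), so that $W$ is a single $H\times H'$-orbit sweep of $X\times_Y X'$ with stabiliser data $H\times_K H'$.

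Fortunately the missing hypothesis holds in both of your applications: the second leg is always $\kk^*\xrightarrow{\;r\;}\kk^*$, which is surjective and smooth since $\kk$ is algebraically closed of characteristic $0$, so $H\times H'\to K$ is onto and your $Q$ really is an $(H\times_K H')$-torsor. With the surjectivity hypothesis added to the lemma and checked at the two places where you invoke it, the remainder of your argument --- the identification of the classifying maps $\DD$ and $\LL$ with the equivariant pairs $(s,\chi)$ on $Z$ and the two specialisations, including the compatibility $s(gz)=\chi(g)s(z)=(\lambda t)^r$ --- is correct and proves the proposition. (Note also that the hypothesis $H^0(Z,\OO^*)=\kk^*$ plays no role in your argument; in the paper it only serves to make the character $\chi$ attached to $\LL$ well defined.)
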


\begin{remark}
The variety $Z'$ together with the map $Z'\to Z$ is called the $r$-th \emph{cyclic cover} of $Z$ ramified along $D$, which is the divisor $\DD$ pulled back to $Z$.

Note that, if $\Pic(Z)=0$, any invertible sheaf on the quotient stack $\XX$ is given by a unique character of $G$.
\end{remark}


\begin{proposition}[{\cite[Thm.~2.3.3 and Rem.]{Cadman}}]
Let $\XX$ be a stack obtained by roots from a Deligne-Mumford stack.
Then $\XX$ is also a Deligne-Mumford stack.
\end{proposition}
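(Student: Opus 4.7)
Since the class of stacks obtained by roots is defined by iteration, and since the Deligne-Mumford property is étale-local on the stack, the plan is to reduce to a single root construction, either $\sqrt[r]{\LL/\XX}$ or $\sqrt[r]{\DD/\XX}$, applied to a Deligne-Mumford stack $\XX$, and to verify the DM property after pulling back along an étale atlas $U \to \XX$ with $U$ an affine scheme. The upshot is that étale-locally each root stack will be described as a quotient of a scheme by the finite étale group $\mu_r$ (which is finite étale thanks to $\chara \kk = 0$), and such quotients are Deligne-Mumford.

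For the line bundle root, I would shrink $U$ further so that $\LL|_U$ is trivial. Then the classifying map $U \to \BB \kk^*$ factors through a point, and unwinding the fibre product
\[
\sqrt[r]{\LL/\XX} \times_\XX U = U \times_{\BB \kk^*} \BB \kk^*
\]
using that the fibre of the $r$-th power map $\BB \kk^* \to \BB \kk^*$ over the trivial torsor is $\BB \mu_r$, gives the identification $\sqrt[r]{\LL/\XX} \times_\XX U \cong U \times \BB \mu_r$. Since $\BB \mu_r$ is Deligne-Mumford, so is its product with $U$, and descent of the DM property along the étale cover $U \to \XX$ gives the conclusion.

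For the divisor root, I would again shrink $U$ so that $\DD|_U$ becomes principal, cut out by some $f \in \Gamma(U,\OO_U)$; then the map $U \to [\AA^1/\kk^*]$ factors through $U \xrightarrow{f} \AA^1$. Unwinding the fibre product of the definition analogously to Proposition~\ref{prop:root_description_quot} yields
\[
\sqrt[r]{\DD/\XX} \times_\XX U \cong \bigl[ \Spec(\OO_U[t]/(t^r - f))\, /\, \mu_r \bigr],
\]
where $\mu_r$ acts on $t$ by multiplication. The affine scheme $\Spec(\OO_U[t]/(t^r - f))$ is finite over $U$, and $\mu_r$ is a finite étale group scheme in characteristic zero, so the quotient stack is Deligne-Mumford; again descent along $U \to \XX$ finishes the argument.

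The main subtlety, and the only real work, is in verifying these étale-local identifications: both $[\AA^1/\kk^*]$ and $\BB \kk^*$ are moduli stacks whose fibre products with the $r$-th power map have to be carefully unwound, and one has to check that this computation is compatible with pullback along the étale atlas. Once the local models $U \times \BB \mu_r$ and $[\Spec(\OO_U[t]/(t^r-f))/\mu_r]$ are in hand, the DM conclusion is immediate from the fact that $\mu_r$ is finite étale.
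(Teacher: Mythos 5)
Your argument is correct. The paper itself gives no proof of this proposition --- it is quoted directly from Cadman --- so there is nothing to compare line by line; but your local computation is exactly the standard one underlying Cadman's result, and the two local models you produce, $U\times\BB\boldsymbol{\mu}_r$ for the line-bundle root and $\bigl[\Spec(\OO_U[t]/(t^r-f))/\boldsymbol{\mu}_r\bigr]$ for the divisor root, are precisely the special case $G=\{1\}$ of the paper's own Proposition~\ref{prop:root_description_quot}. Two small points you should make explicit to close the argument cleanly. First, before invoking \'etale-local descent of the Deligne--Mumford property you need to know that the root stack is an algebraic stack at all; this is automatic because it is a fibre product of algebraic stacks, and then the descent step is the statement that an algebraic stack admitting a representable \'etale surjection from a Deligne--Mumford stack (here $V\to \sqrt[r]{\DD/\XX}\times_\XX U\to\sqrt[r]{\DD/\XX}$, with $V$ an atlas of the local model) is itself Deligne--Mumford. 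Second, since the paper's convention is that stacks are separated and of finite type over $\kk$, one should also note that these properties are preserved: the structure map $\sqrt[r]{\DD/\XX}\to\XX$ (resp.\ $\sqrt[r]{\LL/\XX}\to\XX$) is proper and quasi-finite, as is visible from your local models. Neither point requires any new idea, and the reliance on $\chara\kk=0$ to make $\boldsymbol{\mu}_r$ \'etale is exactly where the hypothesis of the paper's conventions enters.
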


\begin{proposition}[{\cite[Prop.~3.10(vii)]{Alper}}]
\label{prop:cohom_affine_basechange}
Consider the following cartesian square of Artin stacks:
\[
\xymatrix@R=3ex{
\XX' \ar[r] \ar[d]_{f'} & \YY' \ar[d]^{f}\\
\XX \ar[r] & \YY
}
\]
If $f$ is cohomologically affine (i.e., quasi-compact and $f_*$ is exact) and $\YY$ has quasi-affine diagonal over $\kk$, then $f'$ is also cohomologically affine.
Note that the condition on $\YY$ is automatically fulfilled if $\YY$ is a Deligne-Mumford stack.
\end{proposition}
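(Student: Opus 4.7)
The main statement is cited directly from Alper, so the plan is not to reprove it but to explain why each hypothesis applies in our setting and, in particular, why the side remark about Deligne--Mumford stacks is automatic. I would structure the argument as a one-line appeal to the cited reference for the cartesian-square part, followed by a short verification that a separated Deligne--Mumford stack has quasi-affine diagonal, which is what makes the hypothesis on $\YY$ harmless for all stacks encountered in the rest of the paper.

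For the first part, I would simply write: by \cite[Prop.~3.10(vii)]{Alper}, in a cartesian square as drawn, cohomological affineness is preserved under arbitrary base change provided the target has quasi-affine diagonal over the base field. Nothing further is needed here.

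For the remark, the key point is that, under our conventions, every stack (in particular $\YY$) is algebraic, separated and of finite type over $\kk$; if moreover $\YY$ is Deligne--Mumford, its diagonal $\Delta_{\YY}\colon \YY \to \YY\times_\kk \YY$ is representable, unramified, and locally of finite type. Separatedness upgrades this diagonal to a proper monomorphism, hence to a closed immersion; closed immersions are affine, and affine morphisms are a fortiori quasi-affine. Thus $\YY$ has (quasi-)affine diagonal over $\kk$ as soon as it is Deligne--Mumford, which is exactly the content of the note.

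The only potential obstacle is being careful that ``quasi-affine diagonal over $\kk$'' in Alper's terminology really coincides with the diagonal $\Delta_{\YY/\kk}$ rather than with some relative diagonal that would require additional hypotheses on $\YY$; this is clarified in \cite[Sec.~3]{Alper}, where the convention agrees with ours. Once that is checked, no further work is required, and the proposition is recorded essentially as a black box to be invoked later in the paper.
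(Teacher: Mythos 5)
The paper itself gives no proof of this proposition: it is recorded purely as a citation of \cite[Prop.~3.10(vii)]{Alper}, with the final sentence left as an unproved aside. Your treatment of the main implication therefore matches the paper exactly (appeal to Alper and stop), and your instinct to also justify the remark about Deligne--Mumford stacks is reasonable extra content.

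However, your verification of that remark contains a genuine error. You claim that separatedness upgrades the diagonal $\Delta_{\YY}\colon \YY \to \YY\times_\kk\YY$ to a proper \emph{monomorphism}, hence a closed immersion. The diagonal of an algebraic stack is a monomorphism only when $\YY$ is an algebraic space: for a genuinely stacky $\YY$ the fibre of $\Delta_{\YY}$ over a point $(y,y)$ is the automorphism group of $y$, which is nontrivial (consider $\BB G$ for a finite group $G$, whose diagonal has fibre $G$). So the step ``proper monomorphism $\Rightarrow$ closed immersion'' does not apply. The conclusion is nevertheless correct and the repair is short: for a Deligne--Mumford stack the diagonal is representable, unramified and locally of finite type, hence locally quasi-finite; under the paper's conventions it is also quasi-compact and, by separatedness of $\YY$, proper. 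A representable proper quasi-finite morphism is finite, hence affine, hence quasi-affine. (Even without separatedness of $\YY$, a representable separated quasi-finite morphism is quasi-affine by Zariski's main theorem, which already suffices for Alper's hypothesis.) With that substitution your write-up is complete and consistent with how the paper uses the result.
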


\begin{lemma}
\label{lem:root_keeps_coarse}
Let $\XX \to X$ be the map to the coarse moduli space of a Deligne-Mumford stack $\XX$,
and $\XX'$ obtained by roots with divisors from $\XX$.
Then the composition $\XX' \to \XX \to X$ is the coarse moduli map of $\XX'$.
\end{lemma}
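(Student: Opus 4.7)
The plan is to induct on the number of root constructions, reducing to the case of a single root $p\colon \XX' = \sqrt[r]{\DD/\XX} \to \XX$; then for $\pi' \coloneqq \pi \circ p$ I would verify the three characterising properties of a coarse moduli map given in Proposition~\ref{prop:properties_pi_coarse}, namely the bijection on $\kk$-points, the exactness of $\pi'_*$, and the isomorphism $\OO_X \xrightarrow{\sim} \pi'_* \OO_{\XX'}$.

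The bijection on $\kk$-points is essentially automatic: a root construction enlarges the automorphism group of a geometric point lying on $\DD$ by a copy of $\mu_r$ but creates no new isomorphism classes, so the map $|\XX'(\kk)|\to|\XX(\kk)|$ is bijective; composing with the bijection already available for $\pi$ gives the required bijection for $\pi'$.

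For the remaining two properties I would prove that $p$ is cohomologically affine and that $p_*\OO_{\XX'}=\OO_\XX$; combining these with the analogous statements for $\pi$ then yields the conclusion for $\pi'$. Both claims are \'etale-local on $\XX$, so I would work on a scheme atlas $U\to\XX$ small enough that $\LL$ is trivialised and $\DD$ is cut out by a single function $s\in\OO_U$. A direct computation with the defining fibre product (in the spirit of Proposition~\ref{prop:root_description_quot}) identifies $\XX'\times_\XX U$ with the quotient stack $[U_r/\mu_r]$, where $U_r=\Spec(\OO_U[t]/(t^r-s))$ and $\mu_r$ acts via $t\mapsto\zeta t$. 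Since $\OO_{U_r}^{\mu_r}=\OO_U$, the coarse moduli space of the tame Deligne-Mumford stack $[U_r/\mu_r]$ is $U$, so the projection $[U_r/\mu_r]\to U$ is itself a coarse moduli map; Proposition~\ref{prop:properties_pi_coarse} applied to this projection provides the exactness and the identification of pushforwards of structure sheaves on this \'etale chart, and hence also globally for $p$.

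The step that I expect to require the most care is this \'etale-local identification of the root stack with $[U_r/\mu_r]$ and the verification that the latter has coarse moduli $U$; once this is in place the rest of the argument amounts to a two-fold application of the characterisation of coarse moduli maps, together with the fact that cohomological affineness and formation of the direct image of the structure sheaf are both \'etale-local on the target.
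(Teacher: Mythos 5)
Your proposal is correct and follows the same skeleton as the paper's proof: reduce to a single root and verify the three characterising properties of Proposition~\ref{prop:properties_pi_coarse} for the composite, handling the point-set bijection by noting that a root only enlarges stabilisers along $\DD$ without creating new isomorphism classes (the paper does this slightly more carefully, by computing that the fibre of $\XX'\to\XX$ over a closed point is either $\pt$ or $\left[\Spec\left(\kk[t]/(t^r)\right)/\boldsymbol{\mu}_r\right]$, each with a single closed point). Where you genuinely diverge is in establishing cohomological affineness and the isomorphism $\OO_\XX\xrightarrow{\sim}\nu_*\OO_{\XX'}$: the paper argues globally, deducing cohomological affineness of $\nu$ by base change along the defining cartesian square over $[\AA^1/\kk^*]$ (Proposition~\ref{prop:cohom_affine_basechange} together with \cite[Prop.~3.14]{Alper}) and quoting \cite[Thm.~3.1.1(3)]{Cadman} for the structure sheaf, whereas you reduce \'etale-locally to the explicit presentation $[U_r/\boldsymbol{\mu}_r]\to U$ with $U_r=\Spec(\OO_U[t]/(t^r-s))$ and $\OO_{U_r}^{\boldsymbol{\mu}_r}=\OO_U$, which settles both properties at once since a tame finite quotient of an affine scheme has the spectrum of invariants as its coarse space. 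Your route is more self-contained and computational (it reproves the relevant part of Cadman's theorem rather than citing it), at the cost of having to justify that exactness of the pushforward and its formation commute with \'etale base change on the target; the paper's route is shorter but leans on Alper's machinery. Both arguments are sound.
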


\begin{proof}
Let $\XX'$ be obtained by one single root  of a divisor from $\XX$. So $\XX'$ fits into the following cartesian square:
\[
\xymatrix@R=3ex{
\XX' \ar[r] \ar[d]_{\nu} & [\AA^1/\kk^*] \ar[d]^{r}\\
\XX \ar[r]             & [\AA^1/\kk^*]
}
\]
We will check that $\XX' \xrightarrow{\nu} \XX \to X$ is a coarse moduli map, using the characterisation in Proposition~\ref{prop:properties_pi_coarse}. Since this already holds for $\XX \to X$, we are left to show that $\nu$ is cohomologically affine, induces an isomorphism $\OO_{\XX} \to \nu_* \OO_{\XX'}$ and a bijection $|\XX'(\kk)| \to |\XX(\kk)|$.

First we show that $\nu$ is cohomologically affine by applying Proposition~\ref{prop:cohom_affine_basechange}. For this note that $[\AA^1/\kk^*]$ has a quasi-affine diagonal over $\kk$. Moreover, the composition $[\AA^1/\kk^*] \xrightarrow{r} [\AA^1/\kk^*] \to \Spec \kk$ is the same map as $[\AA^1/\kk^*] \to \BB \kk^* \to \Spec \kk$, which is cohomologically affine.
Therefore, by \cite[Prop.~3.14]{Alper}, $[\AA^1/\kk^*] \xrightarrow{r} [\AA^1/\kk^*]$ is also cohomologically affine and consequently $\nu$ as well.

Secondly, $\OO_{\XX} \to \nu_* \OO_{\XX'}$ is an isomorphism by \cite[Thm.~3.1.1(3)]{Cadman}. 

Finally, we need to see whether $\nu$ induces a bijection of closed points
$|\XX'(\kk)| \to |\XX(\kk)|$.
Let $\pt \to \XX$ be a closed point and consider the fibre product
\[
\xymatrix@R=3ex{
P \ar[r] \ar[d]  & \XX' \ar[d] \ar[d] \\
\pt \ar[r] & \XX 
}
\]
By extending this cartesian square by the diagram above, one sees that $P$ is either $\pt$ or $[\Spec\left({}^{\kk[t]}/_{t^r}\right)/\mu_r]$, which has $\pt$ or $\Spec\left({}^{\kk[t]}/_{t^r}\right)$ as an atlas. Note that latter containes a unique closed point.
Therefore, $|\XX'(\kk)| \to |\XX(\kk)|$ is surjective.

Suppose we have two maps $f_i\colon \pt \to \XX'$ with $i=1,2$ such that the composition $\pt \to \XX' \to \XX$ is the same map $g$ for both up to isomorphism.
Therefore, we can build the following diagram where the square is cartesian
\[
\xymatrix@R=3ex{
\pt \ar@/^/[rrd]^{f_i}\ar@/_/[ddr]_{\id} \ar@{-->}[rd] \\
& P \ar[r] \ar[d]  & \XX' \ar[d] \ar[d] \\
& \pt \ar[r]_{g} & \XX
}
\]
Hence, the $f_i$ factor over $P$. Independent of its actual form, $|P(\kk)| = \pt$, so the maps $f_i$ differ only by an isomorphism. This shows that $|\XX'(\kk)| \to |\XX(\kk)|$ is also injective.
\end{proof}

\begin{remark}
A very similar proof also works for roots with line bundles. We will show the analoguous statement in a more general situation, namely for gerbes, 
see Example~\ref{ex:lb_roots_coarse}.
\end{remark}

\begin{lemma}[{\cite[Prop.~3.1]{Pardini}}]
\label{lemma.snc-smooth}
Let $Z$ be a smooth variety and $D_1,\ldots,D_l$ prime divisors on $Z$.
Moreover, let $Z'$ be obtained from $Z$ by cyclic covers along the divisors $D_1,\ldots,D_l$.
Then $Z'$ is smooth if and only if $D_1,\ldots,D_l$ are simple normal crossing.
\end{lemma}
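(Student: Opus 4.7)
The plan is to work étale-locally on $Z$ and apply the Jacobian criterion to explicit equations of the iterated cyclic cover. Let $p \in Z$ be a closed point. After choosing an étale neighbourhood $U \to Z$ of $p$, each divisor $D_i$ passing through $p$ is cut out by a single regular function $f_i \in \OO_Z(U)$, while the other $D_j$ do not meet $U$. Relabel so that $D_1,\ldots,D_s$ are the divisors through $p$, with ramification orders $r_1,\ldots,r_s$. Iterating the local description in Proposition~\ref{prop:root_description_quot}, the restriction of $Z'$ to $U$ is isomorphic to
\[
\Spec \OO_Z(U)[t_1,\ldots,t_s]/(t_1^{r_1}-f_1,\ldots,t_s^{r_s}-f_s),
\]
since the cyclic covers along $D_{s+1},\ldots,D_l$ are étale over $U$ and contribute no local equations at points lying over $p$.

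I then apply the Jacobian criterion at a point $q \in Z'$ over $p$. Outside the ramification locus every $t_i$ is a non-zero root of $f_i(p)$, so $r_i t_i^{r_i-1}$ is a unit and the Jacobian automatically has maximal rank; hence $Z'$ is smooth at such $q$ regardless of how the $D_i$ meet. The only interesting locus is therefore $\{t_1=\cdots=t_s=0\}$ in the fibre over $p$, where the block of derivatives $(\partial (t_i^{r_i}-f_i)/\partial t_j)$ vanishes identically. The remaining block, with respect to local coordinates of $Z$ at $p$, is $-(\partial f_i/\partial z_k)$, i.e., the matrix whose rows are the differentials $df_1(p),\ldots,df_s(p)$ in $\Omega^1_{Z,p}\otimes\kappa(p)$. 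Therefore $Z'$ is smooth at $q$ if and only if these differentials are linearly independent.

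Linear independence of $df_1(p),\ldots,df_s(p)$ for every $p$ and every collection of divisors passing through $p$ is precisely the defining property of simple normal crossing divisors (each $D_i$ smooth and the $D_i$ meeting transversally). This yields both implications: if the $D_i$ are simple normal crossing, the Jacobian has full rank at every point of $Z'$ and $Z'$ is smooth; conversely, any failure of transversality at some $p$ produces a point $q$ in the ramification fibre of $Z'$ at which the Jacobian drops rank, hence a singular point of $Z'$.

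The main obstacle is justifying the local presentation displayed above. One has to verify that iterated cyclic covers produce exactly one additional variable $t_i$ per divisor through the chosen base point, and that divisors not meeting the étale neighbourhood contribute only an étale factor to $Z'$. Once this is in place, both the Jacobian computation and the translation into the simple normal crossing condition are straightforward.
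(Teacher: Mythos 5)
The paper does not prove this lemma at all: it is quoted verbatim from Pardini (\cite[Prop.~3.1]{Pardini}), where it is a special case of the smoothness criterion for general abelian covers in terms of building data. Your direct argument is a correct, self-contained replacement for the citation, and it is essentially the computation one would extract from Pardini in the cyclic case. The local presentation $\Spec \OO_Z(U)[t_1,\ldots,t_s]/(t_i^{r_i}-f_i)$ is justified exactly as you say: iterated cyclic covers are fibre products over $Z$, each $D_i$ is Zariski-locally principal because $Z$ is smooth (hence factorial), and where $f_j$ is a unit the factor $\OO[t_j]/(t_j^{r_j}-f_j)$ is \'etale, so it affects neither smoothness nor singularity. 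Two points deserve to be made explicit. First, for the ``only if'' direction you are using the Jacobian criterion in its complete-intersection form: $Z'|_U$ is finite flat of rank $\prod r_i$ over $U$, hence of pure dimension $\dim Z$ and cut out by exactly $s$ equations, so a rank drop of the Jacobian at $q$ really does force a singularity (for an arbitrary set of generators of an ideal, rank drop alone would not suffice). Second, your translation ``independence of $df_1(p),\ldots,df_s(p)$ $\Leftrightarrow$ simple normal crossing'' silently covers the case of a prime divisor $D_i$ that is itself singular at $p$ (e.g.\ nodal, which is normal crossing but not simple normal crossing): there $df_i(p)=0$ because the singular locus of the reduced hypersurface $\{f_i=0\}$ is $\{f_i=df_i=0\}$, so your criterion correctly rejects it; it is worth saying this, since it is exactly where ``simple'' enters. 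With these two remarks spelled out, the proof is complete and arguably more economical than invoking the general theory of abelian covers.
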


\begin{corollary}
\label{cor:smooth_iff_snc}
Let $\XX$ be a smooth Deligne-Mumford stack and $\XX'$ obtained by roots along the divisors $D_1,\ldots,D_l$.
Then $\XX'$ is smooth if and only if $D_1,\ldots,D_l$ are normal crossing.
\end{corollary}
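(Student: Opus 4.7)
The plan is to reduce to Lemma \ref{lemma.snc-smooth} by passing to a smooth étale atlas of $\XX$ and using the local cyclic-cover description of root stacks provided by Proposition \ref{prop:root_description_quot}. Smoothness of a Deligne--Mumford stack is tested étale-locally on a smooth atlas, so since $\XX$ is already smooth I would first pick a smooth étale chart $U \to \XX$ with $U$ an affine scheme, chosen small enough so that each pullback $D_i$ of $\DD_i$ is cut out by a regular function $f_i \in \OO(U)$.

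Because the root construction is defined as a $2$-fibre product, it is compatible with arbitrary base change, so $\XX' \times_\XX U$ is the iterated root stack of $U$ along the divisors $D_1,\ldots,D_l$. Applying Proposition \ref{prop:root_description_quot} iteratively (starting from the trivial presentation $U = [U/1]$ and at each step taking the pullback of the next divisor to the previously constructed cyclic cover), I identify
\[
\XX' \times_\XX U \;\simeq\; [V/H],
\]
where $H = \mu_{r_1} \times \cdots \times \mu_{r_l}$ and $V \to U$ is the iterated cyclic cover of $U$ ramified along $D_1, \ldots, D_l$ with multiplicities $r_1,\ldots,r_l$; concretely
\[
V = \Spec\bigl(\OO(U)[t_1,\ldots,t_l] / (t_1^{r_1}-f_1,\ldots,t_l^{r_l}-f_l)\bigr).
\]
Since $V \to [V/H]$ is a smooth atlas of the quotient stack, smoothness of $\XX' \times_\XX U$ is equivalent to smoothness of $V$, and by étale descent smoothness of $\XX'$ is equivalent to smoothness of the $V$'s associated to a covering family of atlases.

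Finally, I would apply Lemma \ref{lemma.snc-smooth} directly to the cyclic cover $V \to U$: since $U$ is smooth, $V$ is smooth if and only if $D_1,\ldots,D_l$ are simple normal crossing on $U$. Being simple normal crossing is an étale-local property, so this translates into the condition that $\DD_1,\ldots,\DD_l$ are normal crossing on $\XX$, which is what we want. The one point requiring care is the iteration of Proposition \ref{prop:root_description_quot}: one must check that at each stage the next divisor is still given, on the previously constructed chart, by a section on which the cumulated group $\mu_{r_1}\times\cdots\times\mu_{r_i}$ acts through a character. This is automatic because the divisors $D_{i+1},\ldots,D_l$ are pulled back from $U$, hence are invariant under the action on $V$, so the hypothesis of Proposition \ref{prop:root_description_quot} is preserved at every step.
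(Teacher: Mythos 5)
Your proposal is correct and follows essentially the same route as the paper: pass to a smooth étale atlas $U$ of $\XX$, use Proposition~\ref{prop:root_description_quot} to identify the pullback of $\XX'$ with a quotient of the cyclic cover of $U$ along the divisors, and conclude by Lemma~\ref{lemma.snc-smooth}. You merely spell out the base-change compatibility and the iteration step in more detail than the paper does.
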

\begin{proof}
Note that the smooth Deligne-Mumford stack $\XX$ can be \'etale locally covered by smooth varieties. Moreover, by Proposition \ref{prop:root_description_quot}, the root on such a variety $U$ along some of its divisors yields a quotient stack, whose atlas is a cyclic cover of $U$. The application of Lemma \ref{lemma.snc-smooth} concludes the proof. 
\end{proof}

\subsection{Gerbes}
\label{sec:gerbes}

In this section we recall the definition of gerbes and some of their properties. 
We mainly follow \cite{Breen} and we refer to \cite[Ch.~IV.2]{Giraud} for a complete treatment.

\begin{definition}
A \emph{gerbe} on a stack $\XX$ is a stack in groupoids $\mathcal G$ on $\XX$ which is:
\begin{enumerate}
\item \emph{locally non-empty}: for any \'etale morphism $U \to \XX$ of $\XX$, ${\mathcal G} (U)$ is non-empty; 
\item \emph{locally transitive}: for any $x, y \in {\mathcal G} (U)$ there is an open cover $\{V_i \to U\}$, 
such that $x|_{V_i}, y|_{V_i} \in {\mathcal G} (V_i)$ are isomorphic. 
\end{enumerate}
Let $G$ be a sheaf of abelian groups over $\XX$. 
A gerbe $\mathcal G$ on $X$ is called \emph{$G$-banded} (or simply a \emph{$G$-gerbe}) if, 
for every \'etale atlas $U \to \XX$ and for every object $x \in \mathcal{G}(U)$, 
there exists an isomorphism $\alpha_x\colon G|_U \to {\mathcal A}ut_{U, {\mathcal G}}(x)$ of sheaves of groups and all these isomorphisms are compatible with the pullbacks in the following sense. 
Given two objects $x \in \mathcal{G}(U)$ and $y \in \mathcal{G}(V)$ and an arrow $\phi\colon x \to y$ over a morphism of schemes $f\colon U \to V$, the natural pullbacks:
\[ \phi^*\colon  {\mathcal A}ut_{V, {\mathcal G}}(y) \to  {\mathcal A}ut_{U, {\mathcal G}}(x) \quad \mbox{and} \quad f^*\colon\mathcal{G}(V) \to \mathcal{G}(U)  \]
commute with the isomorphisms, i.e., $\alpha_x \circ f^* = \phi^* \circ \alpha_y$.
\end{definition}

\begin{definition}
A gerbe ${\mathcal G}$ on $\XX$ is \emph{trivial} if the fibre category ${\mathcal G} (\XX)$ is non-empty. 
\end{definition}

\begin{example}
The stack $\BB_\XX G$ of $G$-principal bundles on $\XX$ is a $G$-gerbe and it is 
\emph{the} trivial gerbe, i.e., every trivial $G$-gerbe on $\XX$ is isomorphic to it. 
\end{example}

\begin{remark}
By definition each gerbe is locally non-empty, and therefore locally trivial.   
\end{remark}

\begin{example}
\label{ex:lb_roots_coarse}
Let $\XX$ be a stack and $\XX\thickslash H$ an $H$-rigidification of it, 
where $H$ is a flat subgroup stack of the inertia of $\XX$. Then $\XX \to \XX \thickslash H$ is an $H$-gerbe.  
Let $\XX'$ be the $r$-th root of a line bundle over $\XX$. The stack $\XX'$ is a gerbe banded by the constant 
sheaf $\boldsymbol{\mu}_r$, especially $\XX = \XX' \thickslash \boldsymbol{\mu}_r$. Therefore $\XX'$ and $\XX$ have the same coarse moduli space.
\end{example}

An important result about $G$-gerbes is their classification up to isomorphisms that preserve the $G$-banded structure. 
By \cite[Sec.~IV.3.4]{Giraud}, the group $\Het^2(\XX,G)$ classifies the equivalence classes of $G$-gerbes on $\XX$. 

Let $\Gm$ be the constant sheaf to $\kk^*$ and let
\[ 0 \to \boldsymbol{\mu}_r \xrightarrow{\iota} \Gm \xrightarrow{\wedge r} \Gm \to 0  \]
be the Kummer sequence. It induces a long exact sequence 
\[ \ldots \to \Het^1(\XX, \Gm) \to \Het^2(\XX, \boldsymbol{\mu}_r) \xrightarrow{\iota^*} \Het^2(\XX, \Gm)  \to \ldots  .  \]
The $\boldsymbol{\mu}_r$-gerbes whose $\Het^2(\XX, \boldsymbol{\mu}_r)$-class has zero image via $\iota^*$ in $\Het^2(\XX, \Gm)$ are called \emph{essentially trivial} and, from the exact sequence, they are exactly the ones obtained as $r$-th roots of a line bundle on $\XX$.     

Let $\mathcal{G}$ be a $\boldsymbol{\mu}_r$-gerbe on $\XX$. The Leray spectral sequence for the \'etale sheaf $\Gm$ with respect to the map $\nu\colon \mathcal{G} \to \XX$ gives the exact sequence:
\begin{equation} 
\label{Leray for gerbes} 
\tag{\ensuremath{\spadesuit}}
0 \to \Het^1(\XX, \Gm) \xrightarrow{\nu^*} \Het^1(\mathcal{G}, \Gm) \xrightarrow{\mathrm{res}} \Het^0(\XX,R^1\nu_* \Gm) \xrightarrow{\mathrm{obs}} \Het^2(\XX, \Gm) \to \ldots  
\end{equation} 
Here $\Het^1(\mathcal{G}, \Gm) = \Pic(\mathcal{G})$, $\Het^0(\XX,R^1\nu_* \Gm)=\Pic(\BB \mu_r)=\ZZ/r\ZZ$ 
since the the fibre of $\nu$ is isomorphic to $\BB \mu_r$,
and the map  
$\mathrm{res}$ is the restriction of line bundles to this fiber. The map $\mathrm{obs}\colon\ZZ/r\ZZ  \to \Het^2(\XX, \Gm)$ sends $\bar 1$ to the image by $\iota^*$ of the class of the $\boldsymbol{\mu}_r$-gerbe 
$\mathcal{G}$. Thus it is zero if and only if the gerbe is essentially trivial and in this case the sequence becomes short exact.

\section{Construction of Mori dream stacks}

In the following, we propose the definition of a Mori dream stack, similar to the one of \MD-spaces. Before we give the definition, we make two preliminary observations.
First, since \MD-spaces have at most quotient singularities, we can associate to any \MD-space a smooth Deligne-Mumford stack which will be smooth.
Moreover for a stack $\XX$, we change the definition of its Cox ring to $\RR(\XX) = \bigoplus_{\LL\in\Pic(\XX)} H^0(\XX,\LL)$ (see Remark~\ref{rem:cl-pic} for a reason).
The structure as an algebra is obtained in a similar way as in Section~\ref{sec:md_spaces}:

If $\Pic(\XX)$ is finitely generated as a $\ZZ$-module, we can fix an isomorphism $\Pic(\XX) \cong \ZZ/r_1\ZZ \times\cdots\times \ZZ/r_k\ZZ$ with $r_i \geq 0$. To avoid confusion, we note that $r_i$ can be zero, in which case $\ZZ/r_i\ZZ = \ZZ$.
For each $1\leq i \leq k$, choose a line bundle $\LL_i$ that generates the corresponding $\ZZ/r_i\ZZ$, and moreover an isomorphism $\sigma_i\colon \LL^{r_i} \xrightarrow{\sim} \OO_\XX$.

Note that any line bundle $\LL \in \Pic(\XX)$ can be written uniquely up to isomorphism as $\bigotimes \LL_i^{a_i}$ with $0 \leq a_i$ and $a_i < r_i$ for $r_i\neq0$. Moreover, for $r_i\neq0$ the chosen isomorphisms $\sigma_i$ induce isomorphisms $\LL_i^{a_i} \otimes \LL_i^{b_i} \xrightarrow{\sim} \LL_i^{c_i}$, where $0 \leq a_i,b_i, c_i < r_i$ and $ a_i + b_i = c_i \in \ZZ/r_i\ZZ$. 
These data define the sheaf of algebras
$\overline\RR= \bigoplus_{\LL\in \Pic(\XX)} \LL$ on $\XX$.

\begin{definition}
Let $\XX$ be a stack such that $\Pic(\XX)$ is finitely generated.
The \emph{Cox ring} of $\XX$ is the $\Pic(\XX)$-graded ring
\[\RR(\XX) = H^0(\XX, \overline\RR).\]
\end{definition}

\begin{remark}
By \cite[Prop.~4.7.3]{SGA1} and \cite[Prop.~4.1.]{SGA8}, the sheaf of $\OO_\XX$-algebras $\overline\RR$ defines a $\Hom(\Pic(\XX),\kk^*)$-torsor over $\XX$.

Actually, there are many ways to define the algebra structure on $\RR(\XX)$ by choosing different (not necessarily minimal) generating sets of $\Pic(\XX)$. All of them yield isomorphic Cox rings.

Moreover, we want to note here that we can also define such a $\Pic$-graded ring for a variety $X$. If $X$ is $\QQ$-factorial, this will be then a subring of the $\Cl$-graded Cox ring $\RR(X)$.
\end{remark}

\begin{definition}
We call a Deligne-Mumford stack $\XX$ a \emph{Mori dream stack} if 
\begin{enumerate}
\item $\XX$ is smooth, 
\item $H^0(\XX,\OO_\XX^*) = \kk^*$, 
\item $\Pic(\XX)$ is finitely generated as a $\ZZ$-module,
\item $\RR(\XX)$ is finitely generated as a $\kk$-algebra,
\end{enumerate}
\end{definition}

In the following we will always use the abbreviation \emph{\MD-stack}.

\begin{definition}
We call a Mori dream stack $\XX$ a \emph{Mori dream quotient stack} (or \emph{\MD-quotient stack} for short) if
\[
\XX = \stackquot{\Spec \RR(\XX) \setminus V(J_{\irr}) }{ \Hom(\Pic(\XX),\kk^*) } ,
\]
where $J_{\irr}$ is defined as for \MD-spaces.
\end{definition}

\begin{remark}
\label{rem:cl-pic}
For an orbifold, the Picard group and the divisor class group coincide. In the general case, the divisor class group can be strictly contained in the Picard group.
As an example, for a positive integer $r$, the stack $\BB \boldsymbol{\mu}_r$ is an \MD-quotient stack.
Here $\Pic(\BB \boldsymbol{\mu}_r) = \ZZ/r\ZZ$ which gives the quotient description, but $\Cl(\BB \boldsymbol{\mu}_r) = 0$.
\end{remark}

\begin{remark}
\label{rem:Xcan_smooth}
For a reductive group scheme $G$ acting linearly on some scheme $Z$, we can look at the map $[Z^{ss}/G] \to Z^{ss}/G$.
If $[Z^{ss}/G]$ is a Deligne-Mumford stack, then the scheme $Z^{ss}/G$ is already the coarse moduli space, see for example \cite{Alper}.

Therefore an \MD-space $X = Z^{ss} /G$ with $Z = \Spec \RR(X)$ and $G = \Hom(\Cl(X),\kk^*)$ gives a smooth Deligne-Mumford stack $\XX = [Z^{ss}/G]$ if $Z^{ss}$ is smooth. Moreover in this case, $\XX$ is the canonical stack of $X$, as the singular locus of $X$ has codimension at least $2$, and $\XX \to X$ is an isomorphism outside the singular locus. 

Moreover it is a classical fact that every variety
with finite quotient singularities is the coarse moduli space of a canonical smooth Deligne-Mumford stack unique up to isomorphism; see~\cite[Rem.~4.9]{Fantechi-etal}. 
\end{remark}

Our aim for the rest of the section is to study how the definitions of \MD-stacks and \MD-quotients behave with respect to the constructions we recalled in the previous section. First we describe the canonical stack of an \MD-space and then we prove that root constructions and gerbes preserve the 
property of being an \MD-stack.
\begin{theorem} 
\label{theorem.canonical MD-stack}
Let $X$ be an \MD-space. Then its canonical stack $\Xcan$ is an \MD-stack and $\RR(X) =  \RR(\Xcan)$. 
Moreover, suppose that $\Spec \RR(X) \setminus V(J_{\irr})$ is smooth. Then its canonical stack $\Xcan$ is an \MD-quotient, so
\[
\Xcan = \stackquot{\Spec \RR(\Xcan) \setminus V(J_{\irr}) }{  \Hom(\Pic(\Xcan),\kk^*)}.
\]
\end{theorem}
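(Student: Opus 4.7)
The plan is to verify the four defining properties of an \MD-stack for $\Xcan$, to identify the Cox rings of $X$ and $\Xcan$ graded piece by graded piece, and then to deduce the quotient description from Remark~\ref{rem:Xcan_smooth}.

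First, smoothness of $\Xcan$ holds by the very definition of the canonical stack. The property $H^0(\Xcan,\OO_{\Xcan}^*) = \kk^*$ follows from the isomorphism $\pi_*\OO_{\Xcan} \cong \OO_X$ of Proposition~\ref{prop:properties_pi_coarse}\eqref{it:struct} together with the corresponding property on $X$. For the Picard group, since $\Xcan$ is smooth, Weil and Cartier divisors coincide, so $\Pic(\Xcan) = \Cl(\Xcan)$; and because the canonical stack map $\pi\colon\Xcan\to X$ is an isomorphism outside a codimension-two locus, restriction yields $\Cl(\Xcan) \cong \Cl(X)$. Hence $\Pic(\Xcan)$ is finitely generated.

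Next I would match the Cox rings component by component. Fix generators $D_1,\ldots,D_m$ of $\Cl(X)$ and let $\widetilde D_1,\ldots,\widetilde D_m$ denote their proper transforms on $\Xcan$, which are Cartier and hence define line bundles $\LL_1,\ldots,\LL_m$ realising the same generators of $\Pic(\Xcan)$ under the above isomorphism. For any line bundle $\LL$ on $\Xcan$ corresponding to a Weil divisor class $[D]$ on $X$, the sheaves $\pi_*\LL$ and $\OO_X(D)$ are both rank-one reflexive sheaves on the normal variety $X$ which agree on the codimension-two complement where $\pi$ is an isomorphism, so they coincide. Combining this with $H^0(\Xcan,\LL) = H^0(X,\pi_*\LL)$ gives $H^0(\Xcan,\LL) = H^0(X,\OO_X(D))$. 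Making compatible choices of generators and trivialisations on both sides, the ring multiplications on $\RR(X)$ and $\RR(\Xcan)$ (each defined by multiplying sections as rational functions in the common function field $\kk(X) = \kk(\Xcan)$) agree, so $\RR(\Xcan) \cong \RR(X)$ as graded $\kk$-algebras, and in particular $\RR(\Xcan)$ is finitely generated.

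For the quotient description, assume $\Spec \RR(X)\setminus V(J_{\irr})$ is smooth. By Proposition~\ref{prop.MD space as quotient}, $X$ is the GIT quotient of this smooth variety by $G \coloneqq \Hom(\Cl(X),\kk^*)$. Under the identifications established above one has $\RR(\Xcan) = \RR(X)$ and $\Pic(\Xcan) = \Cl(X)$, so $\stackquot{\Spec\RR(\Xcan)\setminus V(J_{\irr})}{\Hom(\Pic(\Xcan),\kk^*)}$ coincides with $\stackquot{\Spec\RR(X)\setminus V(J_{\irr})}{G}$. Remark~\ref{rem:Xcan_smooth} now applies directly: the latter quotient stack is a smooth Deligne-Mumford stack whose structure map to its coarse moduli space $X$ is an isomorphism outside the singular locus of $X$, and by the uniqueness of the canonical stack it equals $\Xcan$.

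The main obstacle is the identification $\RR(\Xcan)\cong\RR(X)$. Both Cox rings are defined only up to non-canonical isomorphism depending on choices of generating divisors and trivialisations, so some care is required to transport these choices compatibly between $X$ and $\Xcan$. The argument relies crucially on $\pi$ being an isomorphism in codimension one together with reflexivity to match $\pi_*\LL$ with $\OO_X(D)$ graded piece by graded piece.
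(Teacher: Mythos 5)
Your proposal is correct and follows essentially the same route as the paper: verify the four \MD-stack axioms using $\Pic(\Xcan)=\Cl(\Xcan)=\Cl(X)$ and the equality of sections $H^0(\Xcan,\LL)=H^0(X,\OO_X(D))$, identify the Cox rings by transporting the choice of generating divisors, and then invoke Remark~\ref{rem:Xcan_smooth} together with the uniqueness of the canonical stack for the quotient description. The only difference is cosmetic: where the paper cites \cite[Sec.~1.2]{Fantechi-etal} for the section comparison you give a direct reflexive-sheaf argument, and you leave implicit the identification $J_{\irr}(\Xcan)=J_{\irr}(X)$, which the paper makes explicit via Lemma~\ref{lem:pullback_ample_lb}.
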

\begin{proof}
First, the canonical stack $\Xcan$, which is smooth by definition, exists by Remark~\ref{rem:Xcan_smooth}.
By construction, we get $\Pic(\Xcan) = \Cl(\Xcan) = \Cl(X)$ which is finitely generated. 
Moreover, we note  that $H^0(X,\OO(D)) = H^0(\Xcan,\OO(D))$ for any divisor class $D$, by \cite[Sec.~1.2]{Fantechi-etal}.

Therefore, $H^0(\Xcan,\OO_{\Xcan}^*) = \kk^*$ and $\RR(\Xcan) = \RR(X)$ as algebras. For the latter note that, if we fix generators $\LL_i$ of $\Pic(\Xcan)$ as in the construction of the Cox ring $\RR(\Xcan)$, then we can write an analoguous resolution $K \onto \Cl(X)$ by using the isomorphism $\Pic(\Xcan)=\Cl(X)$, and vice versa.
Hence $\Xcan$ is indeed an \MD-stack.

By Lemma~\ref{lem:pullback_ample_lb} it is immediate that $J_{\irr}(\Xcan) = J_{\irr}(X)$. If we assume additionally that $\Spec \RR(X) \setminus V(J_{\irr}) = \Spec \RR(\Xcan) \setminus V(J_{\irr})$ is smooth,  
then $\Xcan$ is even an \MD-quotient by Remark~\ref{rem:Xcan_smooth}.

\end{proof}

In the following example we see that the assumption on the smoothness of  $\Spec \RR(X) \setminus V(J_{\irr}(X))$ is indeed necessary to have the description of the canonical stack as an \MD-quotient.

\begin{example}
\label{ex:dicyclic}
Let $\Dic_n$ be the \emph{dicyclic} or \emph{binary dihedral} group of $4n$ elements, defined as: 
\[
\Dic_n = \Genby{ \begin{pmatrix} 0& 1\\-1& 0 \end{pmatrix}, \begin{pmatrix} \xi & 0\\0& \xi^{-1} \end{pmatrix} } = \genby{ s,r \mid s^4 = 1 = r^{2n}, r^n = s^2, srs^{-1} = r^{-2} }
\]
for a primitive $2n$-th root $\xi$ of unity and  
let $X=\AA^2/\Dic_n$ be the quotient by the usual matrix action. 
It turns out that $X$ is an \MD-space. First, it has an isolated quotient singularity at zero. Easy calculations show that 
\[
\Cl(X) = 
\begin{cases}
\ZZ/2\ZZ \oplus \ZZ/2\ZZ & \text{if $n$ even,}\\
\ZZ/2\ZZ & \text{if $n$ is odd,}
\end{cases}
\] 
$\RR(X)= \kk[u,v,w]/(u^n - vw)$ and that invertible global sections are just the non-zero constants. 
As $V(J_{\irr})$ is empty in this case, we computed that $X= \Spec \RR(X) / \Hom(\Cl(X), \kk^*) = \Spec \RR(X)^{\Cl(X)}$.

On the other hand, since $\RR(X)$ is singular at zero, the quotient stack $\XX= [\Spec \RR(X)/ \Hom(\Cl(X), \kk^*)]$ is not smooth. 
Hence $\XX$ differs from the (smooth) canonical stack $\Xcan=[\AA^2/\Dic_n]$. 

The computation of the proof above shows that $\Xcan$ is an \MD-stack with $\Pic(\Xcan) =\Cl(X)$ and $\RR(\Xcan) =\RR(X)$. Consequently, there is no description of $\Xcan$ as an \MD-quotient, since the candidate would be the singular $\XX$.
\end{example}

\begin{theorem} \label{theorem.divisor root preserves MD}
Let $\XX$ be an \MD-stack and $\XX'$ be a stack obtained by root constructions along simple normal crossing divisors from $\XX$. 
Then $\XX'$ is an \MD-stack and its Cox ring is of the form 
\[
\RR(\XX') = \RR(\XX)[z_1,\ldots,z_l]/(z_i^{r_i}-s_i \mid i=1,\ldots,l)
\]
for some positive integers $r_i$ and $s_i \in \RR(\XX)$.
\end{theorem}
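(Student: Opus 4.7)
The plan is to proceed by induction on the number of root constructions, so it suffices to treat a single root $\XX'=\sqrt[r]{\DD/\XX}$ along one prime component $\DD$ of the SNC divisor; the claimed presentation of $\RR(\XX')$ then accumulates one new generator $z_i$ and relation $z_i^{r_i}=s_i$ per step. Smoothness of $\XX'$ is immediate from Corollary~\ref{cor:smooth_iff_snc}. Writing $\pi\colon \XX'\to\XX$ for the projection, the identity $\pi_*\OO_{\XX'}=\OO_\XX$ (from \cite[Thm~3.1.1(3)]{Cadman}, already used in the proof of Lemma~\ref{lem:root_keeps_coarse}) gives $\Gamma(\XX',\OO_{\XX'})=\Gamma(\XX,\OO_\XX)$, so on taking units $H^0(\XX',\OO_{\XX'}^*)=\kk^*$. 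The root stack also carries a tautological line bundle $\MM$ and section $t$ with $\MM^{\otimes r}\cong\pi^*\OO_\XX(\DD)$ and $t^r=\pi^*s$, where $s$ is the defining section of $\DD$, and Cadman's description of $\Pic$ gives a short exact sequence
\[
0\longrightarrow \Pic(\XX)\xrightarrow{\pi^*}\Pic(\XX')\longrightarrow \ZZ/r\ZZ\longrightarrow 0
\]
with cokernel generated by $[\MM]$; hence $\Pic(\XX')$ is finitely generated.

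Next, I would compute the Cox ring by exploiting the fact that every class in $\Pic(\XX')$ has a representative $\pi^*L\otimes\MM^{\otimes a}$ with $0\leq a<r$. The main local step uses Proposition~\ref{prop:root_description_quot}: on an \'etale chart $\Spec B\to\XX$ on which $\DD$ is cut out by $s'\in B$, the stack $\XX'$ pulls back to $[\Spec B[z]/(z^r-s')/\boldsymbol{\mu}_r]$ with $\MM$ corresponding to the weight-one equivariant line bundle and $t$ to $z$. Locally therefore $H^0(\XX',\MM^{\otimes a})=B\cdot z^a$, so $t^a$ is a nowhere-vanishing global section of $\pi_*\MM^{\otimes a}$, forcing $\pi_*\MM^{\otimes a}\cong\OO_\XX$ globally. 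Combining with the projection formula yields
\[
H^0(\XX',\pi^*L\otimes\MM^{\otimes a})=H^0(\XX,L)\cdot t^a\quad\text{for } 0\le a<r,\ L\in\Pic(\XX).
\]
Summing over the $\Pic(\XX')$-grading and using $t^r=\pi^*s$ to absorb the carry-over $\lfloor(a+b)/r\rfloor$ of $s\in\RR(\XX)$ in each product $(ft^a)(gt^b)$, one obtains $\RR(\XX')=\RR(\XX)[z]/(z^r-s)$, finitely generated over $\kk$ because $\RR(\XX)$ is. Iterating yields the presentation in the statement.

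The delicate step will be the globalisation $\pi_*\MM^{\otimes a}\cong\OO_\XX$ for $0\leq a<r$: one must argue that the tautological section $t^a$, which locally takes the form $z^a$ and freely generates $H^0(U',\MM^{\otimes a})$ over each \'etale chart, really trivializes $\pi_*\MM^{\otimes a}$ as a line bundle on $\XX$. Once this is granted, all remaining checks (finite generation, units, and the ring multiplication table) become formal bookkeeping with the $\Pic(\XX')$-grading.
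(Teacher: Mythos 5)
Your proposal is correct and follows essentially the same route as the paper: smoothness via Corollary~\ref{cor:smooth_iff_snc}, invertible global sections via $\nu_*\OO_{\XX'}\cong\OO_\XX$, the Picard group via Cadman's pushout description, and the Cox ring via the tautological bundle and section. The only real difference is that the step you flag as delicate --- that multiplication by $\tau^k$ identifies $H^0(\XX,\LL)$ with $H^0(\XX',\nu^*\LL\otimes\TT^{k})$, equivalently that $\nu_*\TT^{\otimes k}\cong\OO_\XX$ for $0\le k<r$ --- is exactly \cite[Cor.~3.1.3]{Cadman}, which the paper simply cites, whereas you re-derive it from the local quotient presentation; your local argument does close the gap, since $t^{k}$ generates $\nu_*\TT^{\otimes k}$ on every chart and a line bundle admitting a nowhere-vanishing section is trivial.
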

\begin{proof}
Smoothness is ensured by Corollary~\ref{cor:smooth_iff_snc}. 

Without loss of generality, we restrict to the case of a single smooth divisor $\DD$ on $\XX$ and show the remaining properties  of \MD-stacks for the stack $\nu\colon\XX' = \sqrt[r]{\DD/\XX} \to \XX$.

By \cite[Thm.~3.1.1]{Cadman} we have the following pushout-diagram for the Picard groups:
\[
\xymatrix@R=3ex@C=1ex{
1 \ar@{|->}[d] & \ZZ \ar[d] \ar[rr]^{\cdot r} && \ZZ \ar[d]\\
\OO(\DD) & \Pic(\XX) \ar[rr]_{\nu^*} && \Pic(\XX') \ar@{=}[r]^-{\sim} &  {}^{\Pic(\XX) \oplus \ZZ}/_{\ZZ\cdot(\OO(\DD),-r)}
}
\]
The line bundle $\TT$ on $\XX'$ corresponding to $(\OO,1)$ is called the tautological line bundle. Moreover, there is an isomorphism $\sigma\colon \TT^r \xrightarrow{\sim} \nu^* \OO(\DD)$ and a tautological section $\tau$ of $\TT$ such that $\sigma(\tau^r) = \nu^*(s)$, where $s$ is the section cutting out $\DD$.

Consequently, for any line bundle $\LL'$ on $\XX'$, there exists a line bundle $\LL$ on $\XX$, unique up to isomorphism, and a unique integer $0\leq k< r$, such that
$\LL' \cong \nu^*\LL \otimes \TT^k.$
Moreover by \cite[Cor.~3.1.3]{Cadman}, the multiplication by $\tau^k$ gives an isomorphism between global sections: 
\[
H^0(\XX,\LL) \xrightarrow{\sim} H^0(\XX',\nu^*\LL) \xrightarrow[\cdot \tau^k]{\sim} H^0(\XX',\LL'). 
\]
By \cite[Thm.~3.1.1(3)]{Cadman}, there is an isomorphism  $\nu_*\OO_{\XX'} \cong \OO_{\XX}$, therefore $H^0(\XX', \OO^*_{\XX'}) = H^0(\XX, \OO^*_{\XX})=\kk^*$.

Next we show that the Cox ring $\RR(\XX')$ is finitely generated.
Choose line bundles $\LL_1,\ldots,\LL_k$ on $\XX$ and isomorphisms $\sigma_{i}\colon \LL_i^{s_i} \xrightarrow{\sim} \OO_{\XX}$ to define the algebra structure on the Cox ring of $\XX$. 
Since $\Pic(\XX')$ is generated by $\nu^* \LL$ and $\TT$,
we can use $\nu^* \sigma_i$ and $\sigma\colon\TT^r \xrightarrow{\sim} \nu^* \OO(\DD)$ to obtain the algebra structure on $\RR(\XX')$.
With these data the Cox Ring of $\XX'$ is $\RR(\XX') = \RR(\XX)[z]/(z^r-s)$.

Hence $\XX'$ is indeed an \MD-stack. 
\end{proof}

\begin{theorem} \label{thm.gerbes preserve MD}
Let $\XX$ be a stack and $\nu\colon\XX'\to \XX$ an $H$-gerbe on $\XX$, where $H$ is an abelian group scheme. Then $\XX$ is an \MD-stack if and only if $\XX'$ is an \MD-stack. Moreover in this case, their Cox rings are isomorphic.
\end{theorem}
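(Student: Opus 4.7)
The plan is to check the four defining conditions of an \MD-stack one at a time, exploiting the facts that the map $\nu$ of a gerbe is faithfully flat (indeed fppf by definition) and that étale locally on $\XX$, the gerbe is trivial, so that $\XX' \to \XX$ looks like $U \times \BB H \to U$. Since we work in characteristic zero and the resulting stacks must be Deligne-Mumford, $H$ is forced to be finite étale, so $\nu$ is itself étale. This immediately transfers \textbf{smoothness} between $\XX$ and $\XX'$. Furthermore, the standard identity $\nu_* \OO_{\XX'} = \OO_{\XX}$ for gerbes (verified on the trivial gerbe since $H^0(\BB H,\OO) = \kk$) gives $H^0(\XX',\OO_{\XX'}^*) = H^0(\XX,\OO_{\XX}^*)$, so condition~(2) also transfers both ways.

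For the Picard groups I would use the Leray spectral sequence for $\Gm$ along $\nu$, in direct analogy with the sequence \eqref{Leray for gerbes} derived in Section~\ref{sec:gerbes}. The sheaf $R^1\nu_* \Gm$ is locally constant on $\XX$ with stalk $\Pic(\BB H) = \Hom(H,\kk^*)$, which is finite, yielding an exact sequence
\[
0 \to \Pic(\XX) \xrightarrow{\nu^*} \Pic(\XX') \to \Hom(H,\kk^*).
\]
Since the third term is finite, $\Pic(\XX)$ is finitely generated as a $\ZZ$-module if and only if $\Pic(\XX')$ is.

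The core of the argument is the identification of the Cox rings. The idea is that the $H$-action provides a character decomposition $\QCoh(\XX') = \bigoplus_{\chi \in \Hom(H,\kk^*)} \QCoh(\XX')_\chi$ whose weight-zero component is $\nu^*\QCoh(\XX)$, and $\nu_*$ is precisely the projection onto this component. For a line bundle $\LL'$ on $\XX'$ of nontrivial weight $\chi$, the pushforward $\nu_* \LL'$ vanishes (checked étale locally on a trivial gerbe, where $H^0(\BB H,\chi) = 0$ for $\chi \neq 0$); hence $H^0(\XX',\LL') = 0$. For line bundles of the form $\LL' = \nu^*\LL$, the projection formula together with $\nu_*\OO_{\XX'} = \OO_{\XX}$ yields $H^0(\XX',\nu^*\LL) = H^0(\XX,\LL)$. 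Combining this with the injectivity of $\nu^*$ on $\Pic$, the sum over $\Pic(\XX')$ collapses to a sum over the subgroup $\nu^*\Pic(\XX)$, producing an isomorphism
\[
\RR(\XX') \;=\; \bigoplus_{\LL' \in \Pic(\XX')} H^0(\XX',\LL') \;\cong\; \bigoplus_{\LL \in \Pic(\XX)} H^0(\XX,\LL) \;=\; \RR(\XX).
\]
By choosing the algebra structures on both sides via generators of $\Pic(\XX)$ with trivialisations $\sigma_i$ and pulling them back along $\nu^*$, this becomes an isomorphism of $\kk$-algebras; finite generation therefore passes between $\RR(\XX)$ and $\RR(\XX')$, completing both implications.

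The main obstacle I expect is establishing the character decomposition of $\QCoh(\XX')$ and the accompanying vanishing $H^0(\XX',\LL') = 0$ in nontrivial weights cleanly, without drowning in fppf descent. Reducing to the trivial gerbe $\BB H$ via an étale atlas $U \to \XX$ and invoking the elementary representation theory of the finite abelian group $H$ should settle the matter; the rest of the proof is essentially bookkeeping with the Leray sequence and the projection formula.
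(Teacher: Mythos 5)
Your proposal is correct and follows essentially the same route as the paper's proof: the Leray spectral sequence for $\Gm$ along $\nu$ to compare Picard groups, the vanishing of global sections of line bundles with nontrivial weight under $H$ (which the paper verifies by the same local reduction to a trivial gerbe $\BB H \times \Spec A$ and graded modules that you phrase as a character decomposition), and the resulting collapse of the Cox ring of $\XX'$ onto the subgroup $\nu^*\Pic(\XX)$. The only cosmetic difference is that the paper obtains $\nu_*\OO_{\XX'}\cong\OO_{\XX}$ by comparing the two coarse moduli maps rather than by direct local computation.
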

 
\begin{proof}
The local triviality of gerbes ensures that $\XX$ is smooth if and only if $\XX'$ is smooth. 
 
As in \eqref{Leray for gerbes}, the Leray spectral sequence for $\nu\colon \XX' \to \XX$ yields the long exact sequence:
\[  0 \to \Pic(\XX) \xrightarrow{\nu^*} \Pic(\XX') \to \Pic(\BB H) \to H^2(\XX, \mathbb{G}_m) \]
and $\Pic(\XX')$ turns out to be an extension of $\Pic(\XX)$ with a subgroup of the finitely generated group $\Pic(\BB H)$. 
Therefore, if one of the Picard groups of $\XX$ and $\XX'$ is finitely generated as a $\ZZ$-module, the other is, too.  

Locally, let $\XX =\Spec A$. Since $\XX'$ is an $H$-gerbe on $\XX$, it is locally trivial, 
and, without loss of generality, we can assume that $\XX' \times_{\XX} \Spec A \cong \BB H \times \Spec A$. Locally, the exact sequence above becomes
\[  0 \to \Pic(\XX) \xrightarrow{\nu^*} \Pic(\XX') \to \Pic(\BB H) \to 0. \]
The line bundles on $\XX'$ of the form $\LL' = \nu^* \LL$, with $\LL \in \Pic(\XX)$, correspond to graded $A$-modules concentrated in degree zero, and for them we have $H^0(\XX, \LL) =H^0(\XX', \LL')$. 
Let now $\LL'$ be a line bundle on $\XX'$ which is not mapped to zero in $\Pic(\BB H)$. Thus it corresponds to a graded $A$-module with trivial degree zero part and therefore $H^0(\XX', \LL')=0$.

Let $\pi:\XX \to X$  and $\pi'= \pi \circ\nu: \XX' \to X$ be the natural maps to the coarse moduli space. Since $\pi_*(\OO_\XX) \cong \pi'_*(\OO_{\XX'}) \cong \pi_*(\nu_*(\OO_{\XX'}))$ and by Proposition \ref{prop:properties_pi_coarse}(4) the functors $\pi_*$ and $\pi'_*$ are exact, it follows that $\nu_*\OO_{\XX'} \cong \OO_{\XX}$, so $H^0(\XX, \OO^*_{\XX}) = H^0(\XX', \OO^*_{\XX'})$.

Moreover $\RR(\XX')= \RR(\XX)$ as a $\kk$-algebra. Indeed, since only the line bundles pulled back from $\XX$ have non-trivial sections, to calculate the Cox Ring of $\XX'$ one can use the pullback of the line bundles and of the isomorphisms to define the Cox ring of $\XX$.
Note that the grading of $R(\XX')$ is obtained from the grading of $R(\XX)$ via the pullback $\nu^*$.

Hence, $\XX$ is an \MD-stack if and only if $\XX'$ is.
\end{proof} 

\begin{corollary}\label{theorem.line bundle root preserves MD}
Let $\XX$ be a stack and $\XX'$ be a stack obtained by root constructions with line bundles on $\XX$. 
Then $\XX$ is an \MD-stack if and only if $\XX'$ is an \MD-stack.
\end{corollary}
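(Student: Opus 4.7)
My plan is to deduce this corollary directly from Theorem~\ref{thm.gerbes preserve MD}, using the fact recorded in Example~\ref{ex:lb_roots_coarse} that root constructions with line bundles are abelian gerbes. The key observation is that if $\XX'$ is the $r$-th root of a line bundle $\LL$ on $\XX$, then the natural map $\nu\colon \XX' \to \XX$ is a $\boldsymbol{\mu}_r$-gerbe, so Theorem~\ref{thm.gerbes preserve MD} applies with $H = \boldsymbol{\mu}_r$, which is an abelian (in fact finite) group scheme.

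First, I would reduce to the case of a single root construction by induction on the number of roots. The base case (no root construction) is trivial. For the inductive step, write $\XX'$ as $\XX'' \to \XX$ where $\XX''$ is obtained from $\XX$ by all but one of the root constructions, and $\XX' = \sqrt[r]{\LL''/\XX''}$ for some line bundle $\LL''$ on $\XX''$. By induction, $\XX$ is an \MD-stack if and only if $\XX''$ is. So it remains to show the equivalence for a single root.

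For the single root case $\nu\colon \XX' = \sqrt[r]{\LL/\XX} \to \XX$, I would invoke Example~\ref{ex:lb_roots_coarse} to conclude that $\nu$ exhibits $\XX'$ as a $\boldsymbol{\mu}_r$-gerbe over $\XX$. Since $\boldsymbol{\mu}_r$ is an abelian group scheme, Theorem~\ref{thm.gerbes preserve MD} immediately yields that $\XX$ is an \MD-stack if and only if $\XX'$ is, and that their Cox rings are isomorphic.

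I do not expect any serious obstacle here, as the entire content of the corollary is packaged into Theorem~\ref{thm.gerbes preserve MD} together with the gerbe interpretation of line-bundle roots. The only minor point to be careful about is that the induction must be done in a consistent order (one root at a time), so that at each step the stack on which we take the root is still arbitrary and the gerbe theorem applies without additional hypotheses.
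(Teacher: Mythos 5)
Your proof is correct and is exactly the argument the paper intends: the corollary is stated without proof precisely because it follows by combining Example~\ref{ex:lb_roots_coarse} (a root of a line bundle is a $\boldsymbol{\mu}_r$-gerbe) with Theorem~\ref{thm.gerbes preserve MD}, iterated one root at a time. Your explicit induction just spells out what the paper leaves implicit.
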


\begin{theorem} \label{theorem.MD quotient is preserved}
Let $\XX$ be an \MD-quotient stack. Then any stack $\XX'$ obtained by root constructions along simple normal crossing divisors and with line bundles from $\XX$ is again an \MD-quotient.
\end{theorem}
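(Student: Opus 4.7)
The plan is to reduce to a single root construction by induction on the number of roots, using the fact that after each individual step we stay within \MD-stacks by Theorems~\ref{theorem.divisor root preserves MD} and~\ref{theorem.line bundle root preserves MD}, and that the coarse moduli space is preserved by Lemma~\ref{lem:root_keeps_coarse} and Example~\ref{ex:lb_roots_coarse}, so further SNC divisors and line bundles remain available to iterate on.

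For a single root I would write $Z = \Spec\RR(\XX) \setminus V(J_{\irr})$ and $G = \Hom(\Pic(\XX),\kk^*)$, so by hypothesis $\XX = [Z/G]$. The $\Pic(\XX)$-grading of $\RR(\XX)$ realises every $\LL \in \Pic(\XX)$, restricted to $Z$, as the trivial line bundle with $G$-linearisation the character $\chi_\LL$, while a global section of $\LL$ corresponds to a $\chi_\LL$-semi-invariant function on $Z$. This is exactly the setup of Proposition~\ref{prop:root_description_quot}: for a divisor root $\XX' = \sqrt[r]{\DD/\XX}$ with $\DD$ cut out by $s \in H^0(\XX,\OO(\DD))$, I obtain $\XX' = [Z'/G']$ with $Z' = Z \times_{\AA^1}\AA^1$ and $G' = G\times_{\kk^*}\kk^*$, where the right-hand arrows are $r$-th powers and the left-hand ones are $s$ and $\chi_{\OO(\DD)}$; in the line-bundle case $Z' = Z$ and $G'$ is built analogously.

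Next I would verify that $G' = \Hom(\Pic(\XX'),\kk^*)$ and $Z' = \Spec\RR(\XX') \setminus V(J_{\irr}(\XX'))$. The Picard-group description from Theorem~\ref{theorem.divisor root preserves MD} reads $\Pic(\XX') = (\Pic(\XX)\oplus\ZZ)/\ZZ\cdot(\OO(\DD),-r)$; applying $\Hom(-,\kk^*)$, which is exact since $\kk^*$ is divisible, produces precisely the fibre-product description of $G'$, and Theorem~\ref{thm.gerbes preserve MD} handles the line-bundle case in the same spirit. The Cox-ring formula $\RR(\XX') = \RR(\XX)[z]/(z^r-s)$ from Theorem~\ref{theorem.divisor root preserves MD} identifies $\Spec\RR(\XX')$ with $\Spec\RR(\XX)\times_{\AA^1}\AA^1$, matching the ambient space of $Z'$; in the line-bundle case the Cox rings even coincide.

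The main obstacle is to match the irrelevant loci, i.e.\ to show that $V(J_{\irr}(\XX'))$ equals the preimage of $V(J_{\irr}(\XX))$ under the projection $\Spec\RR(\XX') \to \Spec\RR(\XX)$. I would pick an ample line bundle $L$ on the common coarse moduli space $X = X'$, pull it back to ample line bundles $\pi^*L$ on $\XX$ and $\pi'^*L = \nu^*\pi^*L$ on $\XX'$ by Lemma~\ref{lem:pullback_ample_lb}, and use the isomorphism $H^0(\XX',\nu^*\pi^*L) \cong H^0(\XX,\pi^*L)$ established in the proof of Theorem~\ref{theorem.divisor root preserves MD} (or in that of Theorem~\ref{thm.gerbes preserve MD} for a line-bundle root). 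Since $J_{\irr}$ is the radical of the ideal generated by these sections, this yields $J_{\irr}(\XX') = \sqrt{J_{\irr}(\XX)\cdot\RR(\XX')}$, hence the desired identification $Z' = \Spec\RR(\XX') \setminus V(J_{\irr}(\XX'))$. Combining this with the isomorphism on groups gives $\XX' = [Z'/G'] = [\Spec\RR(\XX')\setminus V(J_{\irr}(\XX'))/\Hom(\Pic(\XX'),\kk^*)]$, as required.
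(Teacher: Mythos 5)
Your proposal is correct and follows essentially the same route as the paper: reduce to a single root, apply Proposition~\ref{prop:root_description_quot}, and identify the resulting $[Z'/G']$ with the \MD-quotient data of $\XX'$ using the Picard-group and Cox-ring computations of Theorems~\ref{theorem.divisor root preserves MD} and~\ref{thm.gerbes preserve MD} together with Lemmata~\ref{lem:root_keeps_coarse} and~\ref{lem:pullback_ample_lb} for the irrelevant ideal. You simply spell out the dualisation of the Picard pushout and the matching of irrelevant loci that the paper leaves implicit.
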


\begin{proof}
Let $\nu\colon\XX'\to\XX$ be the induced map and 
\[
\XX = \stackquot{\Spec \RR(\XX) \setminus V(J_{\irr}) }{ \Hom(\Pic(\XX), \kk^*)}
\]
be the \MD-quotient description of $\XX$. 
By the Lemmata~\ref{lem:root_keeps_coarse} and~\ref{lem:pullback_ample_lb}, the line bundle $\nu^*\LL$ 
is still ample and 
the ideal $J_{\irr}'= \sqrt{\genby{H^0(\XX', \nu^*\LL)}}$ in $\RR(\XX')$ gives the irrelevant ideal of $\XX'$. 
The desired description of $\XX'$ 
as an \MD-quotient follows directly from Proposition~\ref{prop:root_description_quot}, after taking into account the results about  
the Cox rings and the Picard groups of the stacks obtained as roots 
from $\XX$ given in Theorems \ref{theorem.divisor root preserves MD} and 
\ref{theorem.line bundle root preserves MD},
.
\end{proof} 

\section{Characterisation of Mori dream stacks}

In this section, we give a characterisation of \MD-stacks among smooth Deligne-Mumford stacks and 
study the conditions under which an \MD-stack can be obtained by roots from its canonical stack which turns out to be an \MD-stack, too. 
The representation of \MD-stacks as \MD-quotient stacks will be our main focus.

We start our analysis of the relations between \MD-stacks and their moduli spaces with the following result.

\begin{theorem} \label{thm.moduli space is MD}
Let $\XX$ be an \MD-stack and let $X$ be its coarse moduli scheme. Then $X$ is an \MD-space.
\end{theorem}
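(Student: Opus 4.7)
My plan is to verify the four conditions of Definition~\ref{def:mdspace} for $X$ in turn, exploiting the coarse moduli map $\pi\colon\XX\to X$ and the hypothesis that $\XX$ is an \MD-stack.

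The first two conditions should be essentially formal. Because $\XX$ is a smooth Deligne--Mumford stack, it admits étale local charts of the form $[U/G]$ with $U$ smooth and $G$ finite; consequently $X$ is étale-locally $U/G$ and therefore has at most quotient singularities, settling (1). Condition (2) follows from $\pi_*\OO_\XX\cong\OO_X$ of Proposition~\ref{prop:properties_pi_coarse}(\ref{it:struct}), which on units gives $H^0(X,\OO_X^*) = H^0(\XX,\OO_\XX^*) = \kk^*$.

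The heart of the argument will be constructing a finite-index inclusion $\Cl(X)\into\Pic(\XX)$. I would define a pullback $\pi^*\colon\Cl(X)\to\Pic(\XX)$ by taking $D\in\Cl(X)$, noting that its restriction to the smooth locus $X_{\sm}$ is Cartier, pulling back to $\pi\inv(X_{\sm})$, and extending uniquely across the codimension-$2$ complement (possible since $\XX$ is smooth). Injectivity follows from the fact that the function fields of $\XX$ and $X$ agree (via $\pi_*\OO_\XX=\OO_X$), so any trivialisation of $\pi^*\OO_X(D)$ gives a rational function witnessing $D=0$ in $\Cl(X)$. For finite index I would show $\Pic(\XX)\otimes\QQ\cong\Cl(X)\otimes\QQ$: on a chart $[U/G]$, a $G$-equivariant line bundle has some power with trivial stabiliser characters, hence descends to $U/G$, so every class in $\Pic(\XX)$ has a multiple in the image of $\Cl(X)$; together with the $\QQ$-factoriality of $X$ from (1), this yields the rational isomorphism. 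Since $\Pic(\XX)$ is finitely generated, the quotient $\Pic(\XX)/\Cl(X)$ is then a finite group, so (3) follows.

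For (4), my plan is to identify $\RR(X)$ with the invariant subring of $\RR(\XX)$ under the natural grading action of the finite group $G\coloneqq\Hom(\Pic(\XX)/\Cl(X),\kk^*)$. For each $D\in\Cl(X)$ the equality
\[ H^0(X,\OO_X(D)) = H^0(\XX,\pi^*\OO_X(D)) \]
can be checked by restricting both sides to $X_{\sm}$ (using reflexivity on $X$ and smoothness on $\XX$ to extend across codimension $\geq 2$) and applying the projection formula together with $\pi_*\OO_\XX=\OO_X$. After matching the algebra structures via compatible choices of generators of $\Cl(X)\subset\Pic(\XX)$, the graded subring $\bigoplus_{D\in\Cl(X)}\RR(\XX)_D$ is identified with $\RR(X)$, and by construction it coincides with $\RR(\XX)^G$. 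Noether's theorem on invariants of finite groups in characteristic zero then delivers the finite generation of $\RR(X)$. The main obstacle I anticipate is the finite-index claim $[\Pic(\XX):\Cl(X)]<\infty$, which is the bridge translating stacky data into a classical finite group action; once it is established, the Cox ring step reduces cleanly to standard invariant theory.
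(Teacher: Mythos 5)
Your proposal is correct, and for conditions (1)--(3) it runs essentially parallel to the paper: the paper likewise gets quotient singularities from the local structure of smooth Deligne--Mumford stacks, gets $H^0(X,\OO_X^*)=\kk^*$ from $\pi_*\OO_\XX\cong\OO_X$, and produces an injection $\Cl(X)=\Pic(\Xcan)\into\Pic(\XX)$ (via the Leray spectral sequence for $\XX\to\Xcan$ rather than your direct extension from $\pi\inv(X_{\sm})$ across codimension two, but to the same effect). The genuine divergence is in step (4). The paper never introduces the finite group $G=\Hom(\Pic(\XX)/\Cl(X),\kk^*)$: it quotes \cite[Lem.~3.2]{Kresch-Vistoli} to get a uniform $d>0$ with $r^d\in\RR(X)$ for every homogeneous $r\in\RR(\XX)$, deduces that $\RR(\XX)$ is a finite $\RR(X)$-module (generated by the monomials $\prod_i r_i^{j_i}$ with $0\le j_i<d$ in a chosen generating set), and concludes by the Artin--Tate lemma \cite[Prop.~7.8]{Atiyah-Macdonald}. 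You instead establish $[\Pic(\XX):\pi^*\Cl(X)]<\infty$ (via descent of a power of any line bundle to the coarse space), identify $\RR(X)$ with the invariant ring $\RR(\XX)^G$, and invoke Noether's finiteness theorem. The two routes are close to dual: your finite-index statement is precisely what makes the paper's exponent $d$ uniform, and Noether finiteness for a finite group acting on a finitely generated algebra is itself usually proved by integrality plus Artin--Tate. Your version buys a cleaner conceptual picture ($\RR(X)$ as the invariant ring of a finite diagonalisable group acting on $\RR(\XX)$, which meshes well with the quotient description used elsewhere in the paper), at the cost of having to justify the descent statement and the equalities $H^0(X,\OO_X(D))=H^0(\XX,\pi^*\OO_X(D))$ for all of $\Cl(X)$; the remaining points you leave implicit (compatibility of the two Cox-ring multiplications, extension of line bundles over codimension-two loci on a smooth stack) are treated at the same level of detail in the paper, so I see no gap.
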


\begin{proof}
Let $\XX$ be an \MD-stack and $\pi\colon\XX\to X$ the map to its coarse moduli space. Since $\XX$ is a smooth Deligne-Mumford stack, 
its coarse moduli space $X$ is normal and has at most quotient singularities. 

We prove that $\Cl(X)$ is finitely generated. Since $\Cl(X)= \Cl(\Xcan) = \Pic(\Xcan)$, we have just to compare  $\Pic(\Xcan)$ and $\Pic(\XX)$. The Leray spectral sequence applied to the map $\pi_{can}\colon \XX \to \Xcan$ and to the sheaf $\OO^*_{\XX}$ gives the exact sequence:
\[ 0\to H^1(\Xcan, {\pi_{can}}_* \OO^*_{\XX}) \to H^1(\XX, \OO^*_\XX) \to H^0(\Xcan, R^1{\pi_{can}}_*\OO^*_\XX) \to \ldots  \]
and then the injective morphism $H^1(\Xcan, {\pi_{can}}_* \OO^*_{\XX})= H^1(\Xcan, \OO_{\Xcan}^*) = \Pic(\Xcan) \hookrightarrow  H^1(\XX, \OO^*_\XX) = \Pic(\XX)$.
Since $\XX$ is an \MD-stack, $\Pic(\XX)$ is finitely generated and the same is true for $\Cl(X)$.

From Proposition \ref{prop:properties_pi_coarse}(5), $\pi_*\OO_\XX \cong \OO_X$, so $H^0(X, \OO^*_X) = H^0(\XX, \OO^*_{\XX})=\kk^*$.

Let us prove that the Cox ring of $X$ is a finitely generated $\kk$-algebra. 
Since $H^0(\Xcan,\LL) = H^0(\XX,\pi_{can}^* \LL)$ and since the tensor product commutes with pullback, $\RR(X) = \RR(\Xcan)$ is a subalgebra of $\RR(\XX)$.

Note that by \cite[Lem.~3.2]{Kresch-Vistoli}, for any homogeneous $r \in \RR(\XX)$ there is a $d>0$ such that $r^d \in \RR(X)$. 
Actually there exists a $d>0$ which works for all homogeneous elements in $\RR(\XX)$. Choose a finite set of homogeneous generators $r_1,\ldots,r_m$ of $\RR(\XX)$.
Then $B = \{ \prod_i r_i^{j_i} \mid 1 \leq j_i < d\}$ generates $\RR(\XX)$ as an $\RR(X)$-module.
Indeed, write any element of $\RR(\XX)$ as a polynomial in the $r_i$ with coefficients in $\kk$. We can write any factor $r_i^a$ of a monomial as $r_i^{bd}\cdot r_i^c$ with $0 \leq c < d$. Note that $r_i^{bd}$ is already in $\RR(X)$. So the polynomial 
becomes a linear combination of elements in $B$ with coefficients in $\RR(X)$.

So we have a chain of inclusions $\kk \into \RR(X) \into \RR(\XX)$ with $\RR(\XX)$ finitely generated as a $\RR(X)$-module.
Hence we can apply \cite[Prop.~7.8]{Atiyah-Macdonald}, so $\RR(X)$ is finitely generaded as a $\kk$-algebra.
\end{proof}

For the characterisation of \MD-orbifolds the following general theorem plays a crucial role.

\begin{proposition}[{\cite[Thm.~6.1]{Geraschenko-Satriano}}] \label{prop.each orbifold is a root}
Let $\XX$ be a smooth Deligne-Mumford orbifold, whose ramification divisor has simple normal crossing components.
Then $\XX$ can be obtained by roots from its canonical stack $\Xcan$.
\end{proposition}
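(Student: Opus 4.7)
The plan is to construct an explicit iterated root stack on $\Xcan$ and to identify it with $\XX$ via the universal property of root stacks. Denote by $\DD_1,\ldots,\DD_l$ the prime components of the ramification divisor of $\XX$, with divisor multiplicities $r_1,\ldots,r_l$, and write $\pi_{\can}\colon\XX\to\Xcan$ for the canonical morphism. Since $\pi_{\can}$ is an isomorphism outside a locus of codimension at least two, each $\DD_i$ pushes forward to a prime divisor $\DD_i^{\can}$ on $\Xcan$, and the definition of divisor multiplicity yields an isomorphism $\pi_{\can}^{*}\OO(\DD_i^{\can}) \cong \OO_\XX(\DD_i)^{\otimes r_i}$, valid first on the preimage of the smooth locus of the coarse moduli space $X$ and then globally by normality. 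The simple normal crossing hypothesis on $\sum\DD_i$ transports to $\sum\DD_i^{\can}$ on $\Xcan$ by the same codimension argument.

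Next, I would form the iterated root stack
\[
\YY \coloneqq \sqrt[r_1]{\DD_1^{\can}/\Xcan} \times_{\Xcan} \cdots \times_{\Xcan} \sqrt[r_l]{\DD_l^{\can}/\Xcan}.
\]
For each $i$, the triple consisting of the line bundle $\OO_\XX(\DD_i)$, its canonical section $s_i$ cutting out $\DD_i$, and the isomorphism $\OO_\XX(\DD_i)^{\otimes r_i}\cong\pi_{\can}^{*}\OO(\DD_i^{\can})$ above satisfies the universal property of $\sqrt[r_i]{\DD_i^{\can}/\Xcan}$, so these data assemble into a morphism $\varphi\colon\XX\to\YY$ over $\Xcan$. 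Both $\XX$ and $\YY$ are smooth Deligne-Mumford stacks with the same coarse moduli space $X$ (the latter by iterated application of Lemma~\ref{lem:root_keeps_coarse}), and $\varphi$ is the identity away from the ramification divisor.

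The main step is to prove that $\varphi$ is an isomorphism; I would verify this étale locally on $\Xcan$. At a geometric point lying on exactly $k$ of the $\DD_i^{\can}$, say $i_1,\ldots,i_k$, the simple normal crossing assumption together with the fact that $\Xcan$ is a scheme in codimension one yields an étale chart $\Spec\kk[y_1,\ldots,y_n]\to\Xcan$ on which $\DD_{i_j}^{\can}=\{y_j=0\}$. Iterated application of Proposition~\ref{prop:root_description_quot} then realises $\YY$ in this chart as the quotient stack
\[
[\AA^n/(\boldsymbol{\mu}_{r_{i_1}}\times\cdots\times\boldsymbol{\mu}_{r_{i_k}})],
\]
with the $j$-th factor acting with weight one on the coordinate $x_j$ (the $r_{i_j}$-th root of $y_j$). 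A local structure result for orbifolds with simple normal crossing ramification identifies $\XX$ étale locally with the same quotient stack, and $\varphi$ matches the two presentations coordinate by coordinate.

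The hard part will be this last étale-local identification: one must show that at the intersection of $k$ ramification components the stabiliser of $\XX$ is exactly $\boldsymbol{\mu}_{r_{i_1}}\times\cdots\times\boldsymbol{\mu}_{r_{i_k}}$, with no extra automorphisms produced by higher codimension strata. This should follow from the orbifold assumption of trivial generic stabiliser (which forces each stabiliser to act faithfully on the normal bundle to its fixed locus) together with the simple normal crossing hypothesis (which splits that normal bundle as a sum of line bundles on which the cyclic factors act diagonally with weight one); this is essentially the content of the cited theorem of Geraschenko-Satriano.
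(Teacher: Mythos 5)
The paper does not actually prove this proposition: it is imported wholesale from \cite[Thm.~6.1]{Geraschenko-Satriano}, accompanied only by a remark that the extra smoothing step of [loc.~cit.] is unnecessary under the simple normal crossing hypothesis. Your global strategy does reproduce how that reference (and \cite{Fantechi-etal} in the toric case) argues: push the components of the ramification divisor down to $\Xcan$, form the iterated root stack $\YY$, obtain $\varphi\colon\XX\to\YY$ over $\Xcan$ from the universal property, and verify that $\varphi$ is an isomorphism after base change along an \'etale atlas $V\to\Xcan$, where $\YY\times_{\Xcan}V$ becomes the explicit quotient $[\AA^n/\prod_j\boldsymbol{\mu}_{r_{i_j}}]$ via Proposition~\ref{prop:root_description_quot}.

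The gap is exactly where you place it, and it is the whole theorem: the \'etale-local identification of $\XX$ relative to $\Xcan$ with $[\AA^n/\prod_j\boldsymbol{\mu}_{r_{i_j}}]$ acting diagonally with weight one. Writing $\XX$ \'etale-locally as $[U/G]$ with $G$ finite acting faithfully on a smooth $U$, the subgroup $N\leq G$ generated by pseudoreflections gives $\Xcan=[(U/N)/(G/N)]$ locally, and base change along an atlas of $\Xcan$ reduces you to showing that $[U/N]\to U/N$ is the asserted abelian iterated root stack; one must prove that the snc hypothesis forces $N$ to be a product of cyclic groups acting diagonally (for instance $N=S_3$ in its reflection representation on $\AA^2$ is excluded only because its branch divisor is a cuspidal cubic, so the single component of the ramification divisor is not smooth). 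Saying this ``is essentially the content of the cited theorem'' concedes rather than closes the gap. Separately, your justification of the transport steps rests on the false assertion that $\pi_{\can}\colon\XX\to\Xcan$ is an isomorphism outside codimension two: it fails to be an isomorphism along the ramification divisor itself whenever that divisor is nonempty, and if the assertion were true the proposition would be vacuous. The pushforward of the $\DD_i$ and the isomorphism $\pi_{\can}^{*}\OO(\DD_i^{\can})\cong\OO(\DD_i)^{\otimes r_i}$ (compatible with the sections) can be salvaged by working over the preimage of $X_{\sm}$ minus the singular locus of the branch divisor, where $\Xcan\to X$ is an isomorphism, and extending by normality; but the claim that snc on $\XX$ descends to snc on $\Xcan$ is a statement at the points where components meet, which is again part of the local analysis you have deferred.
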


To avoid confusion, we want to note that \cite[Thm.~6.1]{Geraschenko-Satriano} is actually formulated for arbitrary ramification divisors. In such a case the root constructions with these divisors may yield a singular stack, which is then approximated with a smooth stack. This additional step is not necessary here.

\begin{lemma}[{\cite[Lem.~5.4]{Geraschenko-Satriano}}]
\label{lem.abelian-snc}
Let $U$ be a smooth algebraic space and $G$ a diagonalisable group scheme (e.g., subgroup of a torus) which acts properly on $U$ with finite stabilisers. 
Then the ramification divisor on $[U/G]$ has simple normal crossing components.
\end{lemma}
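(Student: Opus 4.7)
The plan is to reduce the statement to a purely local model and then observe that the model is an arrangement of coordinate hyperplanes, which is trivially simple normal crossing.

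First I would work étale-locally. Let $u \in U$ be a closed point with stabiliser $G_u \subset G$; since $G$ is diagonalisable and acts with finite stabilisers, $G_u$ is a finite diagonalisable group, hence isomorphic to a product $\boldsymbol{\mu}_{n_1} \times \cdots \times \boldsymbol{\mu}_{n_k}$. Since we are in characteristic zero and $G$ is reductive (diagonalisable), an étale slice theorem applies: there exists a $G_u$-invariant étale neighbourhood $V \to U$ of $u$ such that the morphism of stacks
\[
[V/G_u] \longrightarrow [U/G]
\]
is étale and hits the image of $u$. Equivalently, one can take $V$ to be an étale neighbourhood of the origin in the tangent space $T_u U$ equipped with the induced linear $G_u$-action. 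This is essentially Luna's slice theorem, and in the setting of tame quotient stacks it is available e.g.\ via Alper--Hall--Rydh; the paper cited applies it in exactly this form.

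Next, since $G_u$ is a finite diagonalisable group acting linearly on $T_u U$, complete reducibility lets me pick coordinates $x_1,\dots,x_n$ on $T_u U$ that are weight vectors, say with characters $\chi_1,\dots,\chi_n \in \Hom(G_u,\kk^*)$. Étale-locally the ramification divisor of $[U/G]$ pulls back to the ramification divisor of $[T_u U / G_u]$, and the latter is precisely the union of those coordinate hyperplanes $\{x_i=0\}$ whose generic stabiliser $\ker\chi_i \subset G_u$ is non-trivial: indeed, a prime divisor of $[T_u U / G_u]$ corresponds to a $G_u$-invariant prime divisor of $T_u U$, and the generic stabiliser along $\{x_i = 0\}$ is exactly $\ker\chi_i$, while along any $G_u$-invariant prime divisor not contained in a coordinate hyperplane the generic stabiliser is trivial.

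A union of coordinate hyperplanes in $T_u U$ is manifestly simple normal crossing, so the pullback of the ramification divisor to $V$ is SNC. Since this holds in an étale neighbourhood of every closed point, and being SNC is an étale-local property on a smooth stack, the ramification divisor of $[U/G]$ has simple normal crossing components. The main obstacle here is really the first step, namely invoking the correct étale slice theorem for a diagonalisable group scheme acting properly with finite stabilisers on a smooth algebraic space (as opposed to the classical Luna setting of a reductive group acting on an affine scheme); once the local model $[T_u U / G_u]$ is in hand, the rest is a direct inspection of a linear diagonal action.
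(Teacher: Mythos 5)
The paper offers no proof of this lemma; it is imported verbatim from Geraschenko--Satriano, so there is nothing internal to compare against. Your reduction to the linearised local model $[T_uU/G_u]$ is sound and is the standard way to begin, but the final step has a genuine gap: being \emph{simple} normal crossing is \emph{not} an \'etale-local property. What is \'etale-local --- ``a union of smooth branches meeting transversally'' --- is the definition of \emph{normal crossings}; the qualifier ``simple'' additionally requires each global irreducible component to be smooth, i.e.\ to have no self-intersections (a nodal curve is \'etale-locally SNC but not SNC). Your local analysis shows that the branches of the ramification divisor through a point are coordinate hyperplanes of the slice, but it does not rule out that two such branches at $u$ belong to one and the same irreducible component of the ramification divisor of $[U/G]$, namely that two distinct prime divisors $\tilde D_1,\tilde D_2$ of $U$ through $u$ lie in a single $G$-orbit. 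This is exactly where diagonalisability must be used globally and not only in the local model: the statement is false for general finite groups (for $S_3$ acting on the plane $x+y+z=0$ by permutation, the three reflection lines form a single prime divisor of the quotient stack with a triple self-crossing). The repair for abelian $G$ is short: if $g\tilde D_1=\tilde D_2$ with $g\in G$, their pointwise stabilisers are conjugate, hence equal since $G$ is abelian; picking $h$ in this common inertia group acting nontrivially on the relevant component of $U$, both $\tilde D_1$ and $\tilde D_2$ are codimension-one irreducible components of the fixed locus $U^h$, which is smooth (char.\ $0$), so they are disjoint. Hence each component of the ramification divisor is smooth, and combined with your local computation this yields SNC.

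Two smaller points. First, in the local model the generic stabiliser along $\{x_i=0\}$ is $\bigcap_{j\neq i}\ker\chi_j$ (the subgroup acting trivially on all the \emph{other} coordinates), not $\ker\chi_i$; this slip does not affect the conclusion that only coordinate hyperplanes can carry nontrivial generic inertia, which is all you use. Second, since the paper defines the ramification divisor via the rigidification, ``nontrivial inertia'' should throughout be read relative to the generic stabiliser of the component of $U$ in question; this is also what makes the choice of $h$ above possible.
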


Theorems~\ref{theorem.divisor root preserves MD} and \ref{thm.moduli space is MD} and  Proposition~\ref{prop.each orbifold is a root} imply that smooth Deligne-Mumford orbifolds
whose ramification divisor has simple normal crossing components are \MD-orbifolds exactly when their coarse moduli spaces are \MD-spaces.
In combination with Example~\ref{ex:lb_roots_coarse} and Theorem~\ref{thm.gerbes preserve MD} we have the following result, which characterise \MD-stacks among the smooth Deligne-Mumford stacks.

\begin{corollary} \label{cor.all stacks with a MD moduli space are MD}
Let $\XX$ be a smooth Deligne-Mumford stack with abelian generic stabiliser, whose ramification divisor has simple normal crossing components. 
Then $\XX$ is an \MD-stack if and only if its coarse moduli space $X$ is an \MD-space. 
\end{corollary}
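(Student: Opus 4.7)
The forward direction requires no work beyond the earlier Theorem~\ref{thm.moduli space is MD}, which asserts that the coarse moduli space of any \MD-stack is an \MD-space (the hypotheses on the generic stabiliser and the ramification divisor are not used in that implication). So the content of the corollary lies in the converse.

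For the converse, the plan is to factor the structure morphism $\XX\to X$ as
\[
\XX \xrightarrow{\nu} \Xrig \xrightarrow{\rho} \Xcan \xrightarrow{\pi} X,
\]
and then apply the three \MD-preservation results already at our disposal to each arrow. First, since the generic stabiliser of $\XX$ is abelian (hence flat), Proposition~\ref{prop.rigidification} produces the rigidification $\nu\colon \XX\to\Xrig$, which is a gerbe banded by the generic stabiliser. The stack $\Xrig$ is a smooth Deligne-Mumford \emph{orbifold} with the same coarse moduli space $X$.

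Next, I would verify that the ramification divisor $\DD^\rig$ of $\Xrig$ has simple normal crossing components: by definition the ramification divisor of $\XX$ sits inside $\nu^{-1}(\DD^\rig)$, and because $\nu$ is a gerbe it is étale-locally of the form $\Xrig\times \BB H$, so the snc hypothesis on $\XX$ transfers to $\Xrig$. With this in hand, Proposition~\ref{prop.each orbifold is a root} realises $\Xrig$ as obtained from $\Xcan$ by a finite sequence of root constructions, and because $\DD^\rig$ is snc these roots are along simple normal crossing divisors.

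Now I chain together the preservation theorems in the correct order. Since $X$ is an \MD-space, Theorem~\ref{theorem.canonical MD-stack} tells us that $\Xcan$ is an \MD-stack. Theorem~\ref{theorem.divisor root preserves MD} then shows that $\Xrig$, obtained from $\Xcan$ by snc root constructions, is also an \MD-stack. Finally, Theorem~\ref{thm.gerbes preserve MD} applied to the abelian gerbe $\nu\colon\XX\to\Xrig$ yields that $\XX$ is an \MD-stack, completing the proof.

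The main obstacle I anticipate is the transfer of the snc property across the rigidification map, i.e.\ showing that $\DD^\rig$ is snc whenever the ramification divisor of $\XX$ is. This should reduce, by étale-local triviality of the gerbe $\nu$, to the elementary fact that the snc condition on a divisor is étale-local on the ambient smooth stack; nothing deeper seems to be required, and all other steps are direct applications of results already established in the paper.
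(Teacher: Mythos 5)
Your argument is correct and follows essentially the same route as the paper: the forward direction is Theorem~\ref{thm.moduli space is MD}, and the converse chains $X \Rightarrow \Xcan$ (Theorem~\ref{theorem.canonical MD-stack}), $\Xcan \Rightarrow \Xrig$ via snc root constructions (Proposition~\ref{prop.each orbifold is a root} and Theorem~\ref{theorem.divisor root preserves MD}), and $\Xrig \Rightarrow \XX$ via the abelian gerbe (Theorem~\ref{thm.gerbes preserve MD}). The one point you flag --- transferring the snc condition to $\DD^{\rig}$ --- is even slightly easier than you suggest, since the paper \emph{defines} the ramification divisor of $\XX$ as the reduced substack inside $\nu^{-1}(\DD^{\rig})$, so the local triviality of the gerbe immediately identifies the two.
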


Our last aim is to understand when an \MD-stack can be obtained by roots from its canonical stack. 

\begin{theorem}
\label{thm:main_thm}
Let $\XX=\raisebox{0.5ex}{$\stackquot{\Spec \RR(\XX) \setminus V(J_{\irr}) }{  \Hom(\Pic(\XX), \kk^*) }$}$ be an \MD-quotient stack. 
Then it can be obtained by roots from its canonical stack $\Xcan$,
which is itself an \MD-quotient stack.
\end{theorem}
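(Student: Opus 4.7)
The plan is to factor the morphism $\XX \to \Xcan$ through the rigidification $\nu\colon \XX \to \Xrig$, handling the orbifold part and the gerbe part separately. Since $\XX = [U/G]$ with $U = \Spec\RR(\XX) \setminus V(J_{\irr})$ smooth and $G = \Hom(\Pic(\XX),\kk^*)$ diagonalisable, Lemma~\ref{lem.abelian-snc} ensures that the ramification divisor of $\XX$ has simple normal crossing components, and the generic stabiliser $\mu \subseteq G$ is an abelian diagonalisable subgroup. The coarse moduli space $X$ of $\XX$ is an \MD-space by Theorem~\ref{thm.moduli space is MD}, and $\Xrig$ has the same coarse moduli space as $\XX$, so its canonical stack is again $\Xcan$.

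I apply Proposition~\ref{prop.each orbifold is a root} to the smooth orbifold $\Xrig$: this yields $\Xrig$ as a sequence of root constructions along simple normal crossing divisors from $\Xcan$. To complete the factorisation it remains to show that the $\mu$-gerbe $\nu\colon \XX \to \Xrig$ is essentially trivial in the sense of Section~\ref{sec:gerbes}; by Example~\ref{ex:lb_roots_coarse} (and the decomposition of $\mu$ into a product of cyclic subgroups) this identifies it with a sequence of line bundle roots. Using the Leray sequence \eqref{Leray for gerbes}, essential triviality is equivalent to surjectivity of the restriction map $\Pic(\XX) \to \Pic(\BB\mu)$. Here the quotient description pays off: every character of $G$ defines a $G$-linearisation of the trivial line bundle on $U$ and hence a line bundle on $\XX$, which restricts on the stabiliser to the corresponding character of $\mu$; since $G$ is diagonalisable, the restriction $\hat G \onto \hat \mu = \Pic(\BB\mu)$ is surjective, so $\Pic(\XX) \to \Pic(\BB\mu)$ is surjective as well.

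Having established $\XX$ as a sequence of roots from $\Xcan$, it remains to upgrade $\Xcan$ to an \MD-quotient stack. By Theorem~\ref{theorem.canonical MD-stack} this amounts to checking that $\Spec\RR(X) \setminus V(J_{\irr}(X))$ is smooth. Iterating Theorem~\ref{theorem.divisor root preserves MD} (and recalling from Theorem~\ref{thm.gerbes preserve MD} that line bundle roots do not change the Cox ring) we obtain $\RR(\XX) = \RR(X)[z_1,\ldots,z_l]/(z_i^{r_i} - s_i)$, so $\Spec\RR(\XX) \to \Spec\RR(X)$ is a finite surjection, and Lemma~\ref{lem:pullback_ample_lb} together with the equality $H^0(\Xcan,\pi^*L) = H^0(X,L)$ show that $V(J_{\irr}(\XX))$ is the preimage of $V(J_{\irr}(X))$. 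A straightforward Jacobian computation on the equations $z_i^{r_i}=s_i$ then shows that smoothness of the upstairs open set forces smoothness of the downstairs one. The main subtlety I anticipate is verifying essential triviality for a non-cyclic abelian band $\mu$ and correctly reducing the resulting gerbe to iterated line bundle roots.
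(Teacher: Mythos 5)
Your proposal follows essentially the same route as the paper's proof: factor $\XX\to\Xcan$ through the rigidification, apply Proposition~\ref{prop.each orbifold is a root} (together with Lemma~\ref{lem.abelian-snc}) to realise $\Xrig$ as divisor roots of $\Xcan$, use the Leray sequence~\eqref{Leray for gerbes} to realise the gerbe $\XX\to\Xrig$ as iterated line-bundle roots, and transfer smoothness of $\Spec \RR(\XX)\setminus V(J_{\irr})$ down to $\Spec\RR(X)\setminus V(J_{\irr})$ so that $\Xcan$ is an \MD-quotient. The only real difference is in how the gerbe step is pinned down: the paper lifts generators of $\coker\nu^*$ to explicit line bundles $\MM_i$ and invokes Proposition~\ref{prop:root_description_quot} to get a concrete isomorphism with the root stack, whereas you argue via essential triviality and the classification of banded gerbes --- the same computation in slightly different clothing.
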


\begin{proof} 
Let $Z=\Spec \RR(\XX) \setminus V(J_{\irr})$ and $G=\Hom(\Pic(\XX), \kk^*)$.
Since $\XX$ is smooth, $Z$ is also smooth and
by Lemma \ref{lem.abelian-snc}, the ramification divisor of $\XX$ has simple normal crossing components.   

Let $\nu\colon  \XX \to \Xrig$ be the natural map from $\XX$ to its rigidification.
Since the rigidification preserves the ramification divisor, by Proposition \ref{prop.each orbifold is a root} the orbifold $\Xrig$ is obtained by roots of divisors from its canonical stack $\Xcan$. 

Let us prove that $\Xrig$ is indeed an \MD-quotient stack. 
Note that, by Theorem \ref{thm.gerbes preserve MD}, we have also $Z = \Spec \RR(\Xrig) \setminus V(J_{\irr})$. By Corollary \ref{cor:smooth_iff_snc}, the smoothness of $Z$ implies the smoothness of $\Spec \RR(\Xcan) \setminus V(J_{\irr})$ and then of $\Spec \RR(X)  \setminus V(J_{\irr})$, where $X$ is the coarse moduli space of these stacks. 
Therefore, $\Xcan$ is an \MD-quotient stack.

By Theorems \ref{theorem.canonical MD-stack} and \ref{theorem.MD quotient is preserved}, the stack $\Xrig$ is the \MD-quotient $\Xrig= [Z / \Grig]$, where we denote by $\Grig =\Hom(\Pic(\Xrig), \kk^*)$.

Hence we only need to show that $\XX$ is indeed obtained by root constructions with line bundles on $\Xrig$.
By construction, the map $\nu\colon \XX \to \Xrig$ is a $H$-gerbe, where $H$ is an abelian group scheme. 
As in~\eqref{Leray for gerbes}, 
the Leray spectral sequence for $\nu$ assures the existence of the following exact sequence
\[ 0 \to \Pic(\Xrig) \xrightarrow{\nu^*} \Pic(\XX) \to \Pic(\BB H) \to H^2(\Xrig, \mathbb{G}_m). \]
We can write $\coker \nu^* = \bigoplus^n_{i=1} \ZZ/ r_i\ZZ$, so we get the following short exact sequence
\[ 0 \to \Pic(\Xrig) \to \Pic(\XX) \to \bigoplus^n_{i=1} \ZZ/ r_i\ZZ \to 0 \] 
and, dualizing it, the short exact sequence:
\[ 0 \to \prod^n_{i=1} \mu_{r_i} \to G \to \Grig \to 0.\]
Choose now line bundles $\MM_i \in \Pic(\XX)$ which are mapped to a primitive generator in $\bar{1}\in \ZZ/r_i\ZZ$, and to $0$ otherwise.
Then $\MM^{r_i} = \LL_i$ for a line bundle $\LL_i \in \Pic(\Xrig)$.
By construction, $G$ fits in the following diagram with exact rows: 
\[
\xymatrix@R=3ex{
0 \ar[r]& H'\ar@{=}[d] \ar[r] & G \ar[r] \ar[d]_{\chi({\underline \MM})} & \Grig \ar[r] \ar[d]^{\chi({\underline \LL})} & 0\\
0 \ar[r] & H' \ar[r] & (\kk^*)^n \ar[r]_{\wedge \underline r} & (\kk^*)^n \ar[r] & 0,}
\]
where $H'=  \prod^n_{i=1} \mu_{r_i}$ and $\chi$ indicates the characters of line bundles. 
This implies that the right square in the diagram is cartesian. 

Recall that the $G$-action on $Z$ is given by the $\Pic(\XX)$-grading on $\RR(\XX)$. 
The proof of Theorem \ref{thm.gerbes preserve MD} shows that the grading of $\RR(\XX) = \RR(\Xrig)$ are 
related through the pullback 
$\nu^*\colon\Pic(\Xrig) \to \Pic(\XX)$, i.e., $G$ acts on $Z$ via the map $G \to \Grig$ 
given by the dual of the pullback.    

Thus, by Proposition~\ref{prop:root_description_quot}, $\XX=[Z/G]$ is canonically isomorphic to the stack 
$\sqrt[\underline r]{\underline \LL/\Xrig} $, where $\underline r=(r_1, \ldots , r_n)$ and 
$\underline \LL =(\LL_1, \ldots , \LL_n)$.
\end{proof}

\section{Mori dream stacks and smooth toric Deligne-Mumford stacks}

The following propositions are generalisations of \cite[Cor.~2.10]{Hu-Keel} and \cite[Prop.~2.11]{Hu-Keel}.
We want to remind the reader that a \emph{smooth toric Deligne-Mumford stack} $\XX$ is a smooth seperated Deligne-Mumford stack with an action of a Deligne-Mumford torus $\TT$, which is contained as an open dense orbit; see~\cite{Fantechi-etal}.
Here a Deligne-Mumford torus $\TT$ means that $\TT$ is isomorphic to $T\times \BB G$, where $T$ is an ordinary torus and $G$ a finite abelian group.

\begin{proposition} \label{thm. MDquotient Cox ring polyn is toric}
Let $\XX$ be an \MD-quotient stack.
Then $\XX$ is a smooth toric Deligne-Mumford stack if and only if its Cox ring is a polynomial ring.
\end{proposition}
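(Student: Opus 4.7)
The plan is to argue both implications using the quotient description of \MD-stacks together with the behaviour of Cox rings under roots and gerbes established in Section~2.

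For the forward direction, assume $\XX$ is a smooth toric Deligne-Mumford stack. By \cite{Fantechi-etal}, $\XX$ is obtained from its coarse moduli space $X$ (which is itself a toric variety) by root constructions along torus-invariant divisors and line bundles, factoring through the canonical stack $\Xcan$ of $X$. Since $X$ is toric, $\RR(X) = \kk[x_1,\ldots,x_n]$ is a polynomial ring whose variables are indexed by the torus-invariant prime divisors, and by Theorem~\ref{theorem.canonical MD-stack} we have $\RR(\Xcan) = \RR(X)$. By Theorem~\ref{theorem.divisor root preserves MD}, a root of order $r$ along the divisor cut out by a generator $x_i$ modifies the Cox ring to $\RR(\XX)[z]/(z^r - x_i)$, which is again polynomial after eliminating $x_i$ in favour of $z$. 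By Theorem~\ref{thm.gerbes preserve MD}, the subsequent line bundle roots are essentially trivial gerbes and leave the Cox ring unchanged. Hence $\RR(\XX)$ is a polynomial ring.

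For the converse, assume $\RR(\XX) = \kk[x_1,\ldots,x_n]$ as a $\Pic(\XX)$-graded ring, with homogeneous generators $x_i$ of degrees $d_i \in \Pic(\XX)$. Then $\Spec \RR(\XX) = \AA^n$, and the diagonalizable group $G = \Hom(\Pic(\XX),\kk^*)$ acts by $g \cdot x_i = \chi_{d_i}(g)\, x_i$, which is the restriction of the standard diagonal action of $T = (\kk^*)^n$ on $\AA^n$. The irrelevant ideal $J_{\irr}$ is the radical of the ideal generated by $H^0(\XX,\LL)$ for an ample line bundle $\LL$; since each $\Pic(\XX)$-homogeneous piece of a polynomial ring is spanned over $\kk$ by monomials, $J_{\irr}$ is a monomial ideal, and therefore $V(J_{\irr})$ is a union of coordinate subspaces, $T$-invariant and disjoint from $T \subset \AA^n$. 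Thus $T$ acts on $\AA^n \setminus V(J_{\irr})$, and this descends to an action of $\TT \coloneqq T/G$ on $\XX = \stackquot{\AA^n \setminus V(J_{\irr})}{G}$. Because $G$ is a diagonalizable subgroup of $T$, the quotient $\TT$ is a Deligne-Mumford torus, and the image of $T$ in $\XX$ is an open dense substack isomorphic to $\TT$. This exhibits $\XX$ as a smooth toric Deligne-Mumford stack.

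The main subtlety lies in the converse direction, in verifying that $J_{\irr}$ is monomial: although the generators of $J_{\irr}$ are $\Pic(\XX)$-homogeneous polynomials rather than monomials, each graded piece is spanned by monomials, so they generate the same ideal, and taking radicals preserves monomiality. Once this is granted, both directions reduce to direct applications of the root construction formulae of Theorems~\ref{theorem.divisor root preserves MD} and~\ref{thm.gerbes preserve MD} together with the quotient description provided by Theorem~\ref{thm:main_thm}.
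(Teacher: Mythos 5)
Your proof is correct, and it splits the work the same way the paper does for one direction but not the other. Your argument that a polynomial Cox ring forces $\XX$ to be toric is essentially identical to the paper's: embed $G=\Hom(\Pic(\XX),\kk^*)$ into $T=(\kk^*)^n$ via the degrees of the generators, observe that $J_{\irr}$ is a monomial ideal so that $V(J_{\irr})$ is $T$-invariant, and descend to an action of the Deligne--Mumford torus $[T/G]$ with open dense orbit. (Your justification that $J_{\irr}$ is monomial -- each $\Pic(\XX)$-graded piece of $\kk[x_1,\dots,x_n]$ is spanned by monomials, and radicals preserve monomiality -- is in fact slightly more careful than the paper's phrasing; and note the quotient should be the stack quotient $[T/G]$, not the coarse $T/G$, though your subsequent use of ``Deligne--Mumford torus'' shows you mean the former.) Where you genuinely diverge is the direction ``toric $\Rightarrow$ polynomial'': the paper disposes of it in one line by citing \cite[Thm.~7.7]{Fantechi-etal}, whereas you rederive it from the structure theorem of \cite{Fantechi-etal} (toric DM stacks arise by roots along torus-invariant divisors and line bundles from the canonical stack of a toric variety) combined with the paper's own Theorems~\ref{theorem.canonical MD-stack}, \ref{theorem.divisor root preserves MD} and \ref{thm.gerbes preserve MD}, using that $\kk[x_1,\dots,x_n][z]/(z^r-x_i)$ is again a polynomial ring and that gerbes do not change the Cox ring. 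This buys a proof internal to the paper's machinery and makes transparent \emph{why} the Cox ring stays polynomial under each root; the cost is that you still import the FMN structure theorem, and you should note in passing that the torus-invariant divisors being rooted are simple normal crossing (Lemma~\ref{lem.abelian-snc}), as Theorem~\ref{theorem.divisor root preserves MD} requires. No gaps otherwise.
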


\begin{proof}
First we assume that
 $\RR(\XX) = \kk[x_1,\ldots,x_n]$.
Therefore $T = (\kk^*)^n$ acts naturally on $\Spec \RR(\XX) = \AA^n$ by componentwise multiplication.
Fix the presentation $\ZZ^n \onto \Pic(\XX)$ where $e_i$ is sent to the divisor class $[D_i]$ given by $\{x_i = 0\}$.
By applying $\Hom(\blank,\kk^*)$ to this surjection,
we get an inclusion of $G = \Hom(\Pic(\XX),\kk^*)$ into $T$.
Note that the action of $G$ is given as a subgroup of $T$.

Furthermore, the locus $V(J_{\irr})$ is cut out by some of the hyperplanes $\{x_i = 0\}$,
since we can choose the ample line bundle to be given as $\OO(D)$ with 
$D = \sum a_i D_i$, so the global sections of $\OO(D)$ have a basis consisting of mononials in $x_i$.
Therefore, $V(J_{\irr})$ is $T$-invariant and consequently $T$ acts on $Z = \Spec \RR(\XX) \setminus V(J_{\irr})$.

This action descends to an action of $[T/G]$ on $[Z/G]$.
$[T/G]$ has again a torus $T'$ as its coarse moduli space (of dimension $n - \rk G$), 
so by \cite[Sec.~6.1]{Fantechi-etal} $[T/G]$ is a Deligne-Mumford torus, i.e.,  $[T/G] = T' \times \BB H$ for some finite abelian group $H$.
Moreover, since $T$ is open and dense in $Z$, the same holds true for $[T/G]$ inside $[Z/G]$.
Hence $[Z/G]$ is a smooth toric Deligne-Mumford stack.

The converse direction was shown in \cite[Thm.~7.7]{Fantechi-etal}.
\end{proof}

\begin{proposition} \label{thm. MDquotient embedded in toric}
Let $\XX$ be an \MD-quotient stack such that divisor class group of its coarse moduli space is free.
Then there is a closed embedding $\iota\colon\XX \into \YY$ with $\YY$ a smooth toric Deligne-Mumford stack such that $\iota^*\colon \Pic(\YY) \xrightarrow{\sim} \Pic(\XX)$ is an isomorphism.
\end{proposition}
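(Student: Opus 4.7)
The plan is to adapt the classical Hu--Keel argument (\cite[Prop.~2.11]{Hu-Keel}) to the stacky setting, by realising $\XX$ as a closed substack of an ambient toric stack built from a polynomial cover of $\RR(\XX)$.

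First, I would choose a finite collection of homogeneous generators $f_1, \ldots, f_N$ of $\RR(\XX)$, whose degrees $\delta_1, \ldots, \delta_N$ in $\Pic(\XX)$ generate the entire Picard group. Such generators exist: the coarse moduli space $X$ is an \MD-space by Theorem~\ref{thm.moduli space is MD} with free divisor class group, and Theorems~\ref{thm:main_thm} and~\ref{theorem.divisor root preserves MD} identify $\RR(\XX)$ with $\RR(X)[z_1, \ldots, z_l]/(z_j^{r_j}-s_j)$ for suitable $s_j\in\RR(X)$. Starting from generators of $\RR(X)$ as in the variety case, adjoining the $z_j$, and possibly adding further homogeneous elements in the degrees of the torsion line bundles produced by the line bundle roots in Theorem~\ref{thm:main_thm}, one obtains the required set. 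Dually, this choice provides a $G$-equivariant surjection $\kk[T_1,\ldots,T_N]\onto \RR(\XX)$ of $\Pic(\XX)$-graded rings, hence a $G$-equivariant closed immersion $\Spec \RR(\XX) \into \AA^N$, where $G = \Hom(\Pic(\XX),\kk^*)$ acts through the chosen grading.

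Next, I would construct the ambient toric stack. Lifting the generators of the irrelevant ideal $J_{\irr}(\XX)$ to homogeneous elements of $\kk[T_1,\ldots,T_N]$, let $J_\YY$ be the radical of the ideal they generate and set
\[
\YY \coloneqq \stackquot{\AA^N \setminus V(J_\YY)}{G}.
\]
By construction $V(J_\YY)\cap \Spec\RR(\XX)=V(J_{\irr}(\XX))$, so the equivariant embedding descends to a closed embedding $\iota\colon \XX \into \YY$, and the Cox ring of $\YY$ is the polynomial ring $\kk[T_1,\ldots,T_N]$ equipped with the given $\Pic(\XX)$-grading.

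Finally, to conclude I would verify that $\YY$ is an \MD-quotient stack: smoothness is automatic as $\AA^N\setminus V(J_\YY)$ is smooth; global invertible sections are constants by a suitable choice of $J_\YY$; $\Pic(\YY)$ is finitely generated, being the grading group $\Pic(\XX)$ by \cite[Lem.~2.2]{Knop-Kraft-Vust} together with the assumption that the $\delta_i$ generate; and $\RR(\YY)=\kk[T_1,\ldots,T_N]$ is tautologically finitely generated. Proposition~\ref{thm. MDquotient Cox ring polyn is toric} then identifies $\YY$ with a smooth toric Deligne--Mumford stack, and since both $\Pic(\YY)$ and $\Pic(\XX)$ are canonically identified with the same grading group $\Pic(\XX)$, the induced map $\iota^*$ is the required isomorphism. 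The main obstacle is the delicate choice of $J_\YY$: it must simultaneously ensure that $\XX\into\YY$ is a closed embedding of stacks (not merely on coarse moduli), that $\YY$ is Deligne--Mumford (the $G$-stabilisers on $\AA^N\setminus V(J_\YY)$ are finite), and that the resulting stack has no extraneous units, so that $\RR(\YY)$ really equals $\kk[T_1,\ldots,T_N]$ with its tautological grading.
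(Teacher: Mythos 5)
Your route is genuinely different from the paper's: you try to generalise the Hu--Keel argument directly at the level of the stack, presenting $\RR(\XX)$ as a quotient of a polynomial ring and taking $\YY = \stackquot{\AA^N\setminus V(J_\YY)}{G}$, whereas the paper never works with $\RR(\XX)$ directly. Instead it passes to the coarse moduli space $X$, invokes Berchtold--Hausen (\cite[Prop.~5.2]{Berchtold-Hausen}) to embed $X$ into a toric variety $Y$ with $\Cl(Y)\xrightarrow{\sim}\Cl(X)$ --- this is exactly where the freeness of $\Cl(X)$ is used --- and then lifts the embedding step by step: $\Xcan\into\Ycan$, then $\Xrig\into\Yrig$ by performing the same divisorial root constructions on both sides, then $\XX\into\YY$ by the same line-bundle roots. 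All the delicate GIT issues are thereby outsourced to the variety-level result and to Theorems~\ref{theorem.canonical MD-stack}, \ref{thm:main_thm} and \ref{theorem.MD quotient is preserved}.

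The problem with your version is that the points you yourself flag as ``the main obstacle'' are precisely the content of the proof, and they are not resolved. Concretely: (i) nothing guarantees that $G=\Hom(\Pic(\XX),\kk^*)$ acts with finite stabilisers on $\AA^N\setminus V(J_\YY)$ --- a point of $\AA^N$ off $\Spec\RR(\XX)$ where many coordinates vanish can have positive-dimensional stabiliser, so $\YY$ need not be Deligne--Mumford for an arbitrary lift of $J_{\irr}(\XX)$; (ii) to apply Proposition~\ref{thm. MDquotient Cox ring polyn is toric} you must know $\YY$ is an \MD-quotient stack in the paper's sense, i.e.\ that $J_\YY$ is the radical of the sections of an \emph{ample} line bundle on $\YY$, not merely a lift of $J_{\irr}(\XX)$; this is where Hu--Keel need a GIT-chamber argument, and it is absent here; (iii) $H^0$ of the complement having only constant units (equivalently, $V(J_\YY)$ having no divisorial component) is asserted ``by a suitable choice'' but no such choice is exhibited. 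A telling symptom is that your argument never uses the hypothesis that $\Cl(X)$ is free, which the paper needs in an essential way; the hypothesis would have to enter exactly in the steps you leave open. As written, the proposal is a plausible strategy with the key verifications missing rather than a proof.
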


\begin{proof}
First we note that $\XX$ can be obtained by roots from its canonical stack $\Xcan$, by Theorem~\ref{thm:main_thm}.

Let $X$ be the coarse moduli space of $\XX$. For the Cox ring $\RR(X)$ choose a presentation $\RR(X) = \kk[x_1,\ldots,x_m]/I$ where $I$ is a $\Cl(X)$-homogeneous ideal and where $\{x_i=0\}$ define divisors $D_i$ on $X$. Moreover, we assume that among these divisors $D_i$ 
there are those with which the root constructions are performed to obtain $\XX$ (after pulling them back to $\Xcan$).

Since $\Cl(X)$ is assumed to be free, by \cite[Prop.~5.2]{Berchtold-Hausen}, there is a toric variety $Y$ and a closed embedding $X\into Y$ inducing an isomorphism $\Cl(Y) \xrightarrow{\sim} \Cl(X)$ by pullback.
The inclusion $X \into Y$ is induced by the presentation $\RR(Y) = \kk[x_1,\ldots,x_m] \onto \RR(X)$.
The inclusion $X \into Y$ gives also an inclusion $\Xcan \into \Ycan$ with $\Pic(\Ycan) \xrightarrow\sim \Pic(\Xcan)$, by Theorems~\ref{theorem.canonical MD-stack} and \ref{thm:main_thm}.

Now we perform the root constructions on $\Xcan$ along the divisors to obtain $\Xrig$. By performing the analogous root constructions on $\Ycan$, we get a smooth toric Deligne-Mumford orbifold which we denote by $\Yrig$.
It is clear that the inclusion lifts to $\Xrig \into \Yrig$ and $\Pic(\Yrig) \xrightarrow\sim \Pic(\Xrig)$.

Finally, we perform the root constructions with line bundles to obtain $\XX$ from $\Xrig$.
If we do the same constructions starting with $\Yrig$, we arrive at the statement.
\end{proof}

\addtocontents{toc}{\protect\setcounter{tocdepth}{-1}}

\bigskip
\noindent
\begin{tabular}{l p{0.5\textwidth}}
\emph{Email addresses:} & \verb+ahochene@math.uni-koeln.de+
                          \verb+martinengo@math.uni-hannover.de+
\end{tabular}

\end{document}